\documentclass{amsart}
\usepackage{amssymb}
\usepackage{graphicx,overpic} 
\usepackage{color}

\usepackage[hidelinks,pdftex]{hyperref}


\AtBeginDocument{%
   \def\MR#1{}
}


\numberwithin{equation}{section}
\theoremstyle{plain}
\newtheorem{theorem}[equation]{Theorem}
\newtheorem{corollary}[equation]{Corollary}
\newtheorem{lemma}[equation]{Lemma}

\newtheorem*{namedtheorem}{\theoremname}
\newcommand{\theoremname}{testing}
\newenvironment{named}[1]{\renewcommand{\theoremname}{#1}\begin{namedtheorem}}{\end{namedtheorem}}

\theoremstyle{definition}
\newtheorem{definition}[equation]{Definition}
\newtheorem{remark}[equation]{Remark}
\newtheorem{algorithm}[equation]{Algorithm}
\newtheorem{example}[equation]{Example}

\newcommand{\HH}{{\mathbb{H}}}
\newcommand{\RR}{{\mathbb{R}}}
\newcommand{\ZZ}{{\mathbb{Z}}}
\newcommand{\NN}{{\mathbb{N}}}
\newcommand{\CC}{{\mathbb{C}}}
\newcommand{\QQ}{{\mathbb{Q}}}


\renewcommand{\setminus}{{\smallsetminus}}

\DeclareMathOperator{\PSL}{PSL}

\DeclareMathOperator{\Vol}{Vol}
\DeclareMathOperator{\UT}{UT}
\DeclareMathOperator{\PT}{PT}
\newcommand{\SigmaMod}{{\Sigma_{\rm{Mod}}}}
\DeclareMathOperator{\Trace}{Trace}

\title{Arithmetic modular links}

\author{Tali Pinsky}
\address[]{Department of Mathematics, Technion, Haifa, 32000 Israel} 
\email[]{talipi@technion.ac.il} 

\author{Jessica S.~Purcell}
\address[]{School of Mathematics, Monash University, Clayton, VIC 3800, Australia } 
\email[]{jessica.purcell@monash.edu} 

\author{Jos\'e Andr\'es Rodr\'iguez-Migueles}
\address[]{Mathematisches Institut der Universit\"at M\"unchen, Theresienstr. 39, 80333, M\"unchen, Germany } 
\email[]{migueles@math.lmu.de} 

\date{\today}

\begin{document}

\begin{abstract}
We construct a sequence of geodesics on the modular surface such that the complement of the canonical lifts to the unit tangent bundle are arithmetic 3-manifolds. 
\end{abstract}

\maketitle

\section{Introduction}
The modular group $\PSL(2,\ZZ)$ is one of the simplest examples of an arithmetic group. 
The quotient of the upper half plane by the modular group is called the modular surface $\SigmaMod$; it is an arithmetic hyperbolic 2-dimensional orbifold. 

One dimension higher, arithmetic hyperbolic 3-manifolds and 3-orbifolds form families of manifolds with very rich structure. They are also quite special. For example, among knot complements, only the figure-8 knot is arithmetic~\cite{Reid:ArithmeticKnot}, and there exist closed orientable 3-manifolds that do not contain a simple closed curve with arithmetic complement~\cite{BakerReid:Arithmetic}. However, every closed orientable 3-manifold contains an arithmetic link~\cite{HildenLozanoMontesinos}.

Associated with each oriented closed geodesic $\gamma$ on the modular surface is a 3-manifold. This is obtained by lifting the geodesic $\gamma$ into the unit tangent bundle over the modular surface $\UT(\SigmaMod)$ to obtain a corresponding periodic orbit of the geodesic flow $\widehat{\gamma}$ called the \emph{canonical lift}. The 3-manifold is the complement of $\widehat{\gamma}$ in the unit tangent bundle. 

By Thurston's hyperbolization theorem, the complement of a canonical lift of a closed modular geodesic will always be hyperbolic; see Foulon and Hasselblatt~\cite{FoulonHasselblatt}.
What is unknown in general is whether it will be arithmetic for some cases, and if so, what topological, geometric, and algebraic properties of the geodesic yield arithmeticity. 
 
In this paper, we find an explicit family of canonical lift complements that are arithmetic. 

\begin{theorem}\label{Thm:MainArithmetic}
There exists a sequence $\{\gamma_n\}_{n\in\NN}$ of distinct closed geodesics on the modular surface such that for each $n$, the union of the first $n$ canonical lifts $\bigcup_{j=1}^n \widehat{\gamma_j}$ has complement in the unit tangent bundle $\UT(\SigmaMod)$ that is an arithmetic hyperbolic 3-manifold $\UT(\SigmaMod)\setminus\bigcup_{j=1}^n\widehat{\gamma_j}$, obtained by gluing regular ideal octahedra. 
\end{theorem}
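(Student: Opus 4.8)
The plan is to realize each manifold $\UT(\SigmaMod)\setminus\bigcup_{j=1}^n\widehat{\gamma_j}$ as a link complement carrying an explicit decomposition into regular ideal octahedra, since arithmeticity is then automatic.

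\emph{Step 1: passing to a link complement and choosing the geodesics.} Recall the classical identification of $\UT(\SigmaMod)=\PSL(2,\RR)/\PSL(2,\ZZ)$ with the complement of the trefoil knot $K$ in $S^3$, under which the Seifert fibration recovers $\SigmaMod$ as base and each closed geodesic $\gamma$ corresponds, through its canonical lift $\widehat\gamma$, to a \emph{modular knot} in $S^3\setminus K$ encoded combinatorially by the cutting/continued-fraction coding of $\gamma$ (equivalently by a hyperbolic conjugacy class in $\PSL(2,\ZZ)$). Thus the target manifold is a link complement $S^3\setminus(K\cup L_n)$ with $L_n=\widehat{\gamma_1}\cup\cdots\cup\widehat{\gamma_n}$, and the task is to choose the codings of the $\gamma_n$ so that this link is of octahedral type. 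I would select them so that (i) $L_n$ is obtained from $L_{n-1}$ by adjoining one new component in a controlled, chain-like position, so that the decompositions for successive $n$ are compatible and the sequence is genuinely nested; and (ii) for each $n$ the diagram of $K\cup L_n$ is alternating, or can be put in flat fully augmented form, so that a standard checkerboard/polyhedral decomposition is available.

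\emph{Step 2: the regular ideal octahedral decomposition.} Using the structure just arranged — the checkerboard surfaces of the alternating diagram, fully augmented link technology, or a direct model in which a fixed ideal triangulation of $\SigmaMod$ (or of a finite cover of it) contributes one ideal octahedron per triangle — I would produce an ideal cell decomposition of $S^3\setminus(K\cup L_n)$ into octahedra. The crucial point is to show this decomposition is \emph{geometric}, with all octahedra isometric to the regular (all right-angled) ideal octahedron. The cleanest route, and the one I would try first, is to identify $S^3\setminus(K\cup L_n)$ explicitly as a finite cover of a manifold already known to be tiled by regular ideal octahedra — the Whitehead link complement, the Borromean rings complement, or the Bianchi orbifold $\HH^3/\PSL(2,\ZZ[i])$ — and pull back the octahedral structure; Mostow--Prasad rigidity then certifies that this is the complete hyperbolic structure. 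Failing a clean covering description, one solves Thurston's gluing and completeness equations directly, checks that the all-right-angled octahedron is a solution, and uses an angle-structure/Casson--Rivin argument to rule out degenerate solutions.

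\emph{Step 3: arithmeticity and distinctness.} A finite-volume hyperbolic $3$-manifold that decomposes into regular ideal octahedra has invariant trace field $\QQ(\sqrt{-1})$ and invariant quaternion algebra $M_2(\QQ(\sqrt{-1}))$, so by the Neumann--Reid arithmeticity criterion it is arithmetic, in fact commensurable with $\HH^3/\PSL(2,\ZZ[i])$ (as for the Whitehead link and the Borromean rings, after Wielenberg). To see that $\{\gamma_n\}$ really is an infinite family of distinct geodesics, note that the number of octahedra — equivalently $\vol\big(\UT(\SigmaMod)\setminus\bigcup_{j=1}^n\widehat{\gamma_j}\big)$ — is strictly increasing in $n$, so the $\widehat{\gamma_j}$ are pairwise distinct and the $\gamma_j$ pairwise non-homotopic; alternatively one separates them by length, or by their linking numbers with $K$, computed by the Rademacher function.

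\emph{Main obstacle.} The content is entirely in Step 2. Generic closed modular geodesics have canonical lift complements that are far from octahedral, so everything hinges on choosing the codings of the $\gamma_n$ so that their lifts assemble into the octahedral pattern, and then on proving that the resulting combinatorial ideal cell structure is actually realized geometrically by regular octahedra rather than merely existing as a topological decomposition. Pinning down the explicit covering map to a Bianchi orbifold — or, equivalently, the explicit face-pairings of the octahedra together with the verification of the edge and completeness equations — is where the real work lies.
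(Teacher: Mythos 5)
Your Step 3 is fine and matches the paper (Lemma~\ref{Lem:OctahedraArithmetic}: any finite-volume hyperbolic 3-manifold glued from regular ideal octahedra is arithmetic, commensurable with $\PSL(2,\ZZ[i])$), and your overall strategy --- exhibit an explicit octahedral decomposition and let arithmeticity come for free --- is exactly the paper's. But there is a genuine gap where you yourself locate it: Steps 1 and 2 are a menu of possible techniques (alternating diagrams, fully augmented links, an unspecified covering of a Bianchi orbifold, or solving gluing equations directly) with none of them executed, and in particular you never say which geodesics $\gamma_n$ to take nor why their lift complements would be octahedral. The paper's actual mechanism is different from everything on your menu and is the whole content of the proof: pass to the six-fold cover $\Sigma_{1,1}\to\SigmaMod$ (Lemma~\ref{Lem:6FoldCover}), where $\UT(\Sigma_{1,1})\cong\Sigma_{1,1}\times S^1$ is a trivial bundle and the canonical lift of a simple closed curve of slope $p/q$ sits at the \emph{constant height} $\arctan(p/q)$ in the $S^1$ factor. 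The key geometric input (Lemma~\ref{Lem:Octahedron}) is that $\Sigma_{1,1}\times[0,1]$ with two Farey-neighbour slopes drilled from the two boundary levels is a single regular ideal octahedron with two pairs of faces unglued; stacking these blocks for a cyclic chain of Farey neighbours and verifying the Poincar\'e polyhedron/completeness conditions (Theorem~\ref{Thm:FareyPath}) gives the octahedral structure. None of this appears in your proposal, and it is not plausibly recovered by checkerboard surfaces of an alternating diagram of $K\cup L_n$, since these links are not alternating in general.

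A second missing idea is the descent from the punctured torus back to the modular surface. An arbitrary chain of Farey neighbours upstairs does \emph{not} project to a link complement in $\UT(\SigmaMod)$: one must include \emph{all six} lifts of each modular geodesic, i.e.\ close the slope set under the order-three rotation $V\colon p/q\mapsto q/(q-p)$ and under orientation reversal (Lemmas~\ref{Lem:ProjectAndLift} and~\ref{Lem:RotationV}), and then check that the deck group permutes the octahedra freely so the quotient is still tiled by regular ideal octahedra (Theorem~\ref{Thm:ModularArithmetic1}). The paper's sequence $\{\gamma_n\}$ is then produced by extending a path in the Farey triangulation one triangle at a time, each new triangle contributing one new modular geodesic and one new octahedron downstairs; your suggestion of separating the $\gamma_n$ by volume or length for distinctness is consistent with this, but without the construction there is nothing to separate. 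In short: the skeleton of your argument is right, but the two lemmas that make it work --- the Farey-neighbour octahedron block and the $V$-invariance needed to descend --- are absent, and they cannot be supplied by the generic techniques you list.
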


Note that for $n>1$, the manifolds of Theorem~\ref{Thm:MainArithmetic} are complements of more than one geodesic. When $n=1$, the theorem produces a 3-manifold homeomorphic to the Whitehead link complement, which is well known to be arithmetic~\cite[\S~4.5]{MaclachlanReid}. 
This corresponds to $\UT(\SigmaMod)\setminus\widehat{\gamma_0}$ for $\gamma_0$ the shortest geodesic on the modular surface.
It is an open question as to whether this is the only arithmetic canonical lift complement of a single geodesic on the modular surface. 

The theorem is proved by considering canonical lifts of geodesics on a once punctured torus, which is a six-fold cover of the modular surface. In Theorem~\ref{Thm:FareyPath} below, we build an explicit family of geodesics on the punctured torus and we prove that their canonical lifts are built of regular ideal octahedra. Such manifolds are always arithmetic, and the main theorem follows as arithmeticity is invariant under finite covers. 

Because of the explicit nature of the construction, we are further able to obtain geometric information on these manifolds. For example, their volumes are given explicitly, and can be related to the lengths of the geodesics. 

\begin{corollary}\label{Cor:ModularVolumes}
There exists a sequence $\{\gamma_k\}_{k\in\mathbb{N}}$ of closed geodesics on the modular surface with length $\ell({\gamma_k})\nearrow \infty$ such that for $\Gamma_n:=\bigcup_{k=1}^n \gamma_k$
\begin{enumerate}
  \item $\UT(\SigmaMod)\setminus{\widehat{\Gamma}_n}$ is arithmetic,
  \item $\Vol(\UT(\SigmaMod)\setminus{\widehat{{\Gamma}_n}})= n\,v_{oct}/2$, and
  \item $\Vol(\UT(\SigmaMod)\setminus{\widehat{{\Gamma}_n}})\asymp \sqrt{\ell({\Gamma_n})}$.
\end{enumerate}
Here $v_{oct}$ is the volume of a regular ideal octahedron. 
\end{corollary}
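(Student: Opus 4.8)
The plan is to extract (1) and (2) quickly from Theorems~\ref{Thm:MainArithmetic} and~\ref{Thm:FareyPath}, and to concentrate the work on the length estimate in (3). For (1) one simply takes $\{\gamma_k\}$ to be the family built on the once-punctured torus $T$ in Theorem~\ref{Thm:FareyPath}, pushed down to the modular surface; Theorem~\ref{Thm:MainArithmetic} then gives that $\UT(\SigmaMod)\setminus\widehat{\Gamma}_n$ is arithmetic. For (2) I would use that $T$ is a degree-$6$ cover of the modular orbifold, so $\UT(T)\to\UT(\SigmaMod)$ is a degree-$6$ covering of $3$-manifolds whose restriction over the complement of $\widehat{\Gamma}_n$ is again a degree-$6$ covering, since the full preimage of $\widehat{\Gamma}_n$ is exactly the canonical lift of the corresponding family of geodesics on $T$ supplied by Theorem~\ref{Thm:FareyPath}. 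By Theorem~\ref{Thm:FareyPath} the complement upstairs is glued from a number of regular ideal octahedra linear in $n$ — reading off the gluing there, $3n$ of them — hence has volume $3n\,v_{oct}$ by additivity of hyperbolic volume over an ideal triangulation; multiplicativity of volume under finite covers then yields $\Vol(\UT(\SigmaMod)\setminus\widehat{\Gamma}_n)=3n\,v_{oct}/6=n\,v_{oct}/2$.

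It remains to prove (3). Since $\Gamma_n$ is a disjoint union of closed geodesics, $\ell(\Gamma_n)=\sum_{k=1}^n\ell(\gamma_k)$, and by (2) it suffices to show $\sum_{k=1}^n\ell(\gamma_k)\asymp n^2$. I would establish the sharper statement $\ell(\gamma_k)\asymp k$ and sum $\sum_{k\le n}k\asymp n^2$. By construction each $\gamma_k$ is the closed geodesic determined by a path of combinatorial length $\asymp k$ in the Farey graph; lifted to $\pi_1(T)$, a rank-$2$ free group with fixed free basis $\{a,b\}$, it is freely homotopic to a cyclically reduced word $w_k$ of word length $|w_k|\asymp k$. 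The upper bound is then routine: $\ell(\gamma_k)$ equals the translation length of $w_k$ for the holonomy representation, and $\ell(w_k)\le d(o,w_k\cdot o)\le |w_k|\cdot\max\{d(o,a\cdot o),d(o,b\cdot o)\}\asymp k$ for a fixed basepoint $o\in\HH$.

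The main difficulty is the matching lower bound $\ell(\gamma_k)\ge c\,k$: one must rule out that the combinatorially long curves $\gamma_k$ are geometrically short, as happens for words close to a power of the boundary parabolic. I would exploit the special shape of words coming from a Farey path. One route: show that $\gamma_k$ meets a fixed simple closed geodesic $\alpha_0\subset T$ (say the first member of the family) in $\asymp k$ points — this should be visible directly from the Farey-path description — and then invoke the collar lemma, since $\alpha_0$ has an embedded collar of definite width, so any geodesic crossing it $N$ times has length at least a definite multiple of $N$, giving $\ell(\gamma_k)\gtrsim i(\gamma_k,\alpha_0)\asymp k$. An alternative route is via the continued-fraction coding of the associated modular geodesic: a Farey path of length $\asymp k$ produces a periodic coding of period $\asymp k$ with uniformly bounded partial quotients, so the larger eigenvalue of the associated matrix product — hence $e^{\ell(\gamma_k)/2}$ — grows at least geometrically in $k$.

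Putting the bounds together gives $\ell(\gamma_k)\asymp k$, hence $\ell(\Gamma_n)\asymp n^2$ and, by (2), $\Vol(\UT(\SigmaMod)\setminus\widehat{\Gamma}_n)=n\,v_{oct}/2\asymp n\asymp\sqrt{\ell(\Gamma_n)}$, which is (3). Finally, the Farey-path construction of Theorem~\ref{Thm:FareyPath} may be taken so that appending each successive step strictly increases the translation length, so that $\ell(\gamma_k)\nearrow\infty$ as stated; I expect the length lower bound in the previous paragraph to be the only genuinely substantive step, the rest being bookkeeping with the covering map and additivity of volume.
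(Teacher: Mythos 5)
Your overall strategy---decompose the preimage complement in $\UT(\Sigma_{1,1})$ into regular ideal octahedra, divide the volume by the covering degree, then estimate lengths---is the same as the paper's, which routes through Corollary~\ref{Cor:VolumeLengthTorus}. However, your derivation of item (2) contains a genuine inconsistency. You assert both that the complement upstairs is a degree-$6$ cover of $\UT(\SigmaMod)\setminus\widehat{\Gamma}_n$ and that it is glued from $3n$ octahedra; these cannot both hold. By Lemma~\ref{Lem:ProjectAndLift}, the full preimage of the $n$ modular geodesics consists of $6n$ canonical lifts in $\UT(\Sigma_{1,1})$ (three slopes for each $\gamma_k$, each taken with both orientations, sitting at $6n$ distinct heights in the $S^1$ factor), so Theorem~\ref{Thm:FareyPath}, via Theorem~\ref{Thm:CanonicalLiftFareyNeighbours}(2), gives $6n$ octahedra upstairs. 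The collection of only $3n$ one-orientation lifts---which is what Theorem~\ref{Thm:CanonicalLiftFareyNeighbours}(1) and Corollary~\ref{Cor:VolumeLengthTorus} treat, with volume $3n\,v_{oct}$---is not a full preimage, so its complement does not cover the modular link complement at all. A consistent count gives $\Vol(\UT(\SigmaMod)\setminus\widehat{\Gamma}_n)=6n\,v_{oct}/6=n\,v_{oct}$, which matches Theorem~\ref{Thm:ModularArithmetic1} (one octahedron per Farey triangle, hence per modular geodesic here) and the $n=1$ case, where the complement is the Whitehead link complement of volume $v_{oct}$, not $v_{oct}/2$. So while your answer reproduces the constant stated in item (2), the derivation does not hang together, and the constant itself should be rechecked against these two independent computations.

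For item (3) your route is different from the paper's but sound in substance, and item (3) is unaffected by the constant above since it only requires $\Vol\asymp n$. The paper's Lemma~\ref{Lem:length} proves $\ell(\gamma_k)\asymp k$ by an explicit trace estimate $(3/2)^k\le\Trace A_k\le 4^k$ for $A_k=LR(RL)^{k-1}$; your upper bound via word length is equivalent, and your lower bound via the collar lemma is a clean alternative for this family, since the lift of $\gamma_k$ has slope $1/k$ and hence meets the slope-$1/0$ geodesic exactly $k$ times (your second suggested route, via bounded partial quotients, is essentially the paper's trace argument). Two small points to tighten: the monotonicity $\ell(\gamma_k)\nearrow\infty$ should not be left as ``may be taken so that''---for this family it follows from $\Trace A_{k+1}\ge(3/2)\Trace A_k$---and for item (1) you should specify that the family on the torus must be closed under the order-three rotation $V$ of Lemma~\ref{Lem:RotationV} (this is why each $\gamma_k$ contributes the three slopes $1/k$, $k/(k-1)$, $(1-k)/1$), as otherwise the drilled set upstairs is not a preimage and the covering argument for arithmeticity does not apply.
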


In Corollary~\ref{Cor:ModularVolumes}, $\asymp$ means \emph{coarsely equivalent}: there are constants $A$, $B$, $C$, and $D$ such that 
\[A\sqrt{\ell({\Gamma_n})}+B \leq \Vol(\UT(\SigmaMod)\setminus{\widehat{{\Gamma}_n}}) \leq C\sqrt{\ell({\Gamma_n})} + D.\]

Note that others have related volume to length of geodesics. Bergeron, Pinsky, and Silberman showed that the volume is bounded by a constant times the length~\cite{BergeronPinskySilberman}. Rodriguez-Migueles showed that there is a sequence of geodesics such that the volume grows linearly 
in the length of the geodesics up to a logarithmic factor~\cite{Rodriguez:Lowerbound}. Upper and lower bounds were extended by Cremaschi and Rodriguez-Migueles~\cite{CremaschiRM}. Cremaschi, Rodriguez-Migueles and Yarmola related volumes of the canonical lifts of a pair of simple closed curves to the Weil-Petersson distance in Teichm\"uller space~~\cite{CRMY}. 

More generally, by taking finite covers, we obtain:

\begin{corollary}\label{Cor:Cover}
Let $\Sigma_{g,r}$ be an orientable punctured surface with any hyperbolic metric. Then there exists a sequence $\{{\Gamma_k}\}_{k\in\mathbb{N}}$ of filling finite sets of closed geodesics on $\Sigma_{g,r}$ with lengths $\ell({\Gamma_k})\nearrow \infty$, such that  $\UT(\Sigma_{g,r})\setminus{\widehat{{\Gamma_k}}}$ is arithmetic for each $k\in\mathbb{N}$ and
\[ 
\Vol(\UT(\Sigma_{g,r})\setminus{\widehat{{\Gamma_k}}})\asymp \sqrt{\ell({\Gamma_k})}.
\]
\end{corollary}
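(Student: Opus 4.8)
The plan is to deduce the corollary from Theorem~\ref{Thm:FareyPath} and Corollary~\ref{Cor:ModularVolumes} by pulling the construction back along finite covers. Two facts drive this: being glued from regular ideal octahedra (hence arithmeticity) is inherited by finite covers, and — for suitably chosen geodesics — the homeomorphism type of the canonical lift complement is independent of the hyperbolic metric, depending only on the isotopy classes of the geodesics on the surface.

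I would begin by making the phrase ``any hyperbolic metric'' precise. If $\Gamma$ is a filling system of \emph{simple} closed curves on an orientable punctured surface $\Sigma$, then in every complete finite-area hyperbolic metric its geodesic representatives form a taut filling system, and taut position of a system of simple closed curves is unique up to ambient isotopy of $\Sigma$; since the unit-tangent lift is natural under ambient isotopy, the link $\widehat{\Gamma}\subset\UT(\Sigma)$, and hence $\UT(\Sigma)\setminus\widehat{\Gamma}$ up to homeomorphism, depends only on the isotopy classes of the components. Also, the marked length spectra of any two complete finite-area hyperbolic metrics on a fixed surface are bi-Lipschitz equivalent, so $\ell_{\rho}(\Gamma)\asymp\ell_{\rho'}(\Gamma)$ with constants not depending on $\Gamma$. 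Hence it suffices to exhibit, for $\Sigma_{g,r}$ with one convenient complete hyperbolic metric, filling systems of simple closed geodesics $\{\Gamma_k\}$ with $\UT(\Sigma_{g,r})\setminus\widehat{\Gamma_k}$ arithmetic, $\ell(\Gamma_k)\nearrow\infty$, and $\Vol(\UT(\Sigma_{g,r})\setminus\widehat{\Gamma_k})\asymp\sqrt{\ell(\Gamma_k)}$.

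For the construction, observe that when $g\ge 1$ there is a finite covering $p\colon\Sigma_{g,r}\to\Sigma_{1,1}$ of the once-punctured torus (realise an appropriate transitive monodromy on $2g+r-2$ sheets); give $\Sigma_{g,r}$ the pulled-back metric and set $\Gamma_k=p^{-1}(\gamma_k')$, the full preimage of the Farey-type family $\{\gamma_k'\}$ from Theorem~\ref{Thm:FareyPath}. These $\gamma_k'$ are simple on $\Sigma_{1,1}$, so the $\Gamma_k$ are simple and filling on $\Sigma_{g,r}$, and $\UT(\Sigma_{g,r})\setminus\widehat{\Gamma_k}\to\UT(\Sigma_{1,1})\setminus\widehat{\gamma_k'}$ is a covering of some fixed degree $d$; the total space is glued from $d$ times as many regular ideal octahedra, hence arithmetic, with volume and total geodesic length each multiplied by $d$, so $\Vol\asymp\sqrt{\ell}$ persists. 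By the previous paragraph the conclusion then holds for an arbitrary hyperbolic metric on $\Sigma_{g,r}$. The remaining types $\Sigma_{0,r}$ I would handle by routing through the modular orbifold $\SigmaMod$ (which every punctured surface finitely covers) or through the metrically rigid $\Sigma_{0,3}$, transporting the family of Corollary~\ref{Cor:ModularVolumes} and invoking that arithmeticity passes to finite covers, exactly as in the proof of Theorem~\ref{Thm:MainArithmetic}.

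The main obstacle is ensuring that the pulled-back (or pushed-forward) geodesic system remains combinatorially rigid — for instance, simple — so that $\UT(\Sigma)\setminus\widehat{\Gamma}$ really is metric-independent: a generic cover or quotient does not preserve simplicity of a curve, and without such rigidity the homeomorphism type of the canonical lift complement genuinely varies with the metric. This works cleanly through covers of $\Sigma_{1,1}$; for the surfaces $\Sigma_{0,r}$, which do not cover $\Sigma_{1,1}$, one must instead verify that the transported Farey-type systems, though no longer simple on $\Sigma_{0,3}$ or $\SigmaMod$, can be arranged to be simple — or at least rigid — on $\Sigma_{0,r}$ itself. A secondary bookkeeping matter, as in Theorem~\ref{Thm:MainArithmetic}, is to pass to full preimages and, where needed, normal cores so that every map between canonical lift complements in the argument is an honest covering.
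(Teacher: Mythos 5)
Your strategy for $g\ge 1$ (pull the Farey family back along a finite cover $\Sigma_{g,r}\to\Sigma_{1,1}$) is workable in outline but diverges from the paper, which treats all $(g,r)$ uniformly: by Remark~\ref{rem: punctured surfaces}, any orientable punctured hyperbolic surface admits an ideal triangulation modelled on the Farey tessellation, so $\pi_1(\Sigma_{g,r})$ embeds as a finite-index subgroup of the modular group and $\Sigma_{g,r}$ covers $\SigmaMod$ directly. One then takes $\Gamma_k$ to be the full preimage of the modular geodesics $\gamma_1,\dots,\gamma_k$ of Lemma~\ref{Lem:length}, so that $\UT(\Sigma_{g,r})\setminus\widehat{\Gamma_k}$ finitely covers the octahedral manifold of Corollary~\ref{Cor:ModularVolumes}; arithmeticity follows from Lemma~\ref{Lem:OctahedraArithmetic}, and volume and total length both scale by the covering degree, so $\Vol\asymp\sqrt{\ell}$ persists. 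Your route additionally requires the unproved (though standard) fact that every $\Sigma_{g,r}$ with $g\ge1$, $r\ge1$ finitely covers $\Sigma_{1,1}$ --- this amounts to realising a transitive representation $F_2\to S_d$, $d=2g+r-2$, in which the commutator of the generators has exactly $r$ cycles --- and it buys you nothing in genus zero.

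The genuine gap is the genus-zero case, which you leave as a contingent sketch. The ``main obstacle'' you cite there --- that without simplicity the homeomorphism type of the canonical lift complement ``genuinely varies with the metric'' --- is not a real obstacle: for any finite set of free homotopy classes of closed curves, simple or not, the geodesic representatives for any two complete finite-area hyperbolic metrics have ambient-isotopic canonical lifts, because the lifts to $\HH^2$ are determined by their endpoint pairs on $\partial\HH^2$, and a change of hyperbolic structure acts on $\partial\HH^2$ by a homeomorphism conjugating the two boundary actions of $\pi_1$ (accidental triple points resolve in $\UT(\Sigma)$ since the three tangent directions are distinct). Once this is granted there is nothing to ``arrange'': every case, including $g=0$, goes through by covering $\SigmaMod$ as above; the preimage systems are filling because the $\gamma_n$ fill $\SigmaMod$ and complementary regions are covered by complementary regions; and the passage from the pullback metric to an arbitrary hyperbolic metric is handled by the bilipschitz comparison exactly as in Corollary~\ref{Cor:VolumeLengthTorus}. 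So the two-track argument should be replaced by the single covering of $\SigmaMod$, and the rigidity worry deleted.
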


\subsection{Acknowledgements}
The authors thank the anonymous referee for their helpful suggestions that have improved the paper. They also thank Neil Hoffman for pointing them towards helpful references. 
This work began during a workshop at the Matrix Centre in Australia; the authors thank the organisers and the centre for hosting them. Purcell was partially supported by the Australian Research Council, grant DP210103136. Rodr\'iguez-Migueles is also very grateful for discussions with Juan Souto and Connie On Yu Hui on related topics, and recognises the support by the Special Priority Programme SPP 2026 Geometry at Infinity funded by the DFG.

\section{Surfaces and unit tangent bundles}\label{Sec:Surfaces}
Let $\Sigma$ be a hyperbolic surface or orbifold. The unit tangent bundle $\UT(\Sigma)$ consists of points of the form $(x,v)$, where $x$ lies on $\Sigma$, and $v$ is a unit vector tangent to $\Sigma$ at $x$. Given a smooth oriented curve $\gamma$ on $\Sigma$, any point $x\in \gamma$ determines a point $(x,v)$ in the unit tangent vector, by letting $v$ be the unit vector at $x$ pointing in the direction of $\gamma$. Then $\gamma$ lifts to a embedded closed curve $\widehat\gamma$ in $\UT(\Sigma)$. 

\subsection{The modular surface}
The \emph{modular surface} is the quotient of $\HH^2$ by the modular group $\PSL(2,\ZZ)$. Background on the modular group can be found in many places, for example in work of Series~\cite{Series:ModularSurface}; see also Brandts, Pinsky, and Silberman~\cite{BPS}. We review a few relevant facts here. 

Consider the upper half plane $\HH^2$ with its hyperbolic metric. Let $U$ be a rotation of $\pi$ about the point $i$ and let $V$ be a rotation of $2\pi/3$ about the point $\frac{1}{2}+i\frac{\sqrt{3}}{2}$, permuting points $\infty, 1, 0$.  These two rotations generate the modular group $\PSL(2,\ZZ)$.
As elements of $\PSL(2,\ZZ)$, $U$ and $V$ have the form
 \[ U = \pm \begin{pmatrix} 0 & -1 \\ 1 & 0 \end{pmatrix} \quad 
 V = \pm \begin{pmatrix} 0 & -1 \\ 1 & -1 \end{pmatrix} \]
The rotation $V$ fixes the hyperbolic ideal triangle in $\HH^2$ with vertices $0, 1, \infty$, while $U$ maps it to an adjacent ideal triangle. Thus the orbit of this ideal triangle under $\PSL(2,\ZZ)$ is an invariant tessellation of $\HH^2$ by ideal triangles called the Farey tessellation. It has an ideal vertex at each point of $\mathbb{Q}\cup\infty$ on $\partial\HH^2$. 

The quotient of $\HH^2$ by the modular group $\PSL(2,\ZZ)$ is an orbifold that is a sphere with a cusp, a cone point of order three, and a cone point of order two. This is called the \emph{modular surface} and denoted $\SigmaMod$. A fundamental domain for $\SigmaMod$ is given by one third of the $0, 1, \infty$ ideal triangle.

Elements of finite order in $\PSL(2,\ZZ)$ are exactly the conjugates of $1, U, V, V^2$.
Every element of infinite order {is a finite word in $U$, $V$ and $V^{-1}=V^2$, involving both letters. Conjugating, one may always obtain a word beginning with $V$ or $V^2$ and ending with $U$. Thus, up to conjugation, any infinite order element can be written in positive powers of
$L=V^2U$ and $R=VU$} \cite{Ghys}, where 

\begin{equation}\label{Eqn:Generators}
 L= \pm \begin{pmatrix} 1 & 1 \\ 0 & 1 \end{pmatrix} \quad \mbox{and} \quad
R= \pm \begin{pmatrix} 1 & 0 \\ 1 & 1 \end{pmatrix}.
\end{equation}

A closed geodesic on the modular surface $\SigmaMod$ is called a \emph{modular geodesic}.
Modular geodesics are in one-to-one correspondence with conjugacy classes of hyperbolic elements in $\PSL(2,\ZZ)$, i.e.\ those with trace more than two.
Note that $R$ and $L$ are parabolic elements, with trace two, but
any word in positive powers in $R$ and $L$ involving both letters is hyperbolic.

A modular geodesic lifts to $\HH^2$, tiled by the Farey tessellation. Series observed that such lifts cut out a sequence of triangles~\cite{Series:ModularSurface}. Within a given triangle an oriented geodesic enters through one side and then either exits through the side on its left (cutting off a single ideal vertex on its left side) or exits to its right. The sequence of rights and lefts determines a word in positive powers of $R$ and $L$ up to cyclic order called the \emph{cutting sequence}. This agrees with the matrix product corresponding to the geodesic.

Now consider the unit tangent bundle of the modular surface, $\UT(\SigmaMod)$. This is a Seifert fibred space whose base orbifold is $\SigmaMod$, with cone points of orders two and three and a cusp. 
In~\cite{Milnor}, Milnor proves that $\UT(\SigmaMod)$ is homeomorphic to the complement of the trefoil in $S^3$, which proof he credits to Quillen. A neighbourhood of the cusp point of $\SigmaMod$ lifts to give a neighbourhood of the trefoil. 
By work of Ghys~\cite{Ghys}, {for any finite collection of closed geodesics on the modular surface, their canonical lifts can be jointly isotoped} in $\UT(\SigmaMod)$ to lie on the branched surface shown in Figure~\ref{Fig:BranchedSurface}. 
These are called \emph{modular links}. 

\begin{figure}
\centering
\begin{overpic}[height=4cm]{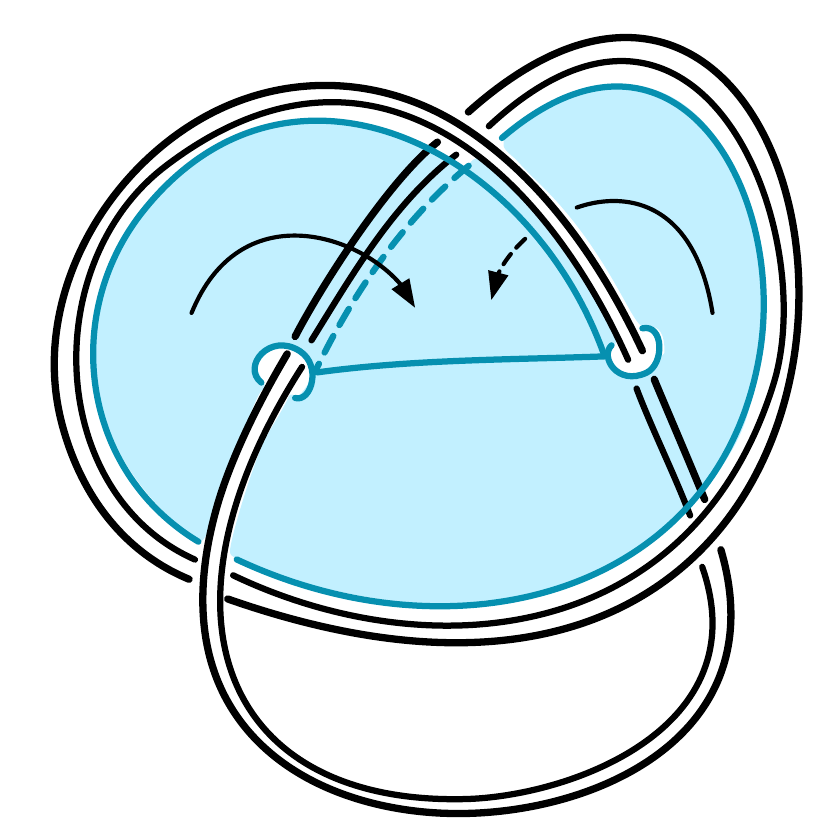}
    \put(2,75){{$R$}}
    \put(97,80){{$L$}}
\end{overpic}
\caption{The branched surface inside the complement of the trefoil, {with the direction of the semiflow indicated, pointing downwards from the branchline.}}
\label{Fig:BranchedSurface}
\end{figure}

A modular link follows two lobes of the branched surface, one on the right and one on the left, and it is determined up to cyclic permutation by the word in the letters $L$ and $R$. Thus the complement of a modular link corresponding to an $n$ component geodesic on the modular surface will be homeomorphic to the complement of a link in $S^3$ with $n+1$ components, with the additional component corresponding to the trefoil. Examples are shown in Figure~\ref{Fig:ModularArithmeticExamples}.

\subsection{The once-punctured torus}
Begin with the closed torus with no punctures, which we will denote by $\Sigma_{1,0}$: the surface of genus one with zero punctures. Once we fix a choice of generators $1/0$ and $0/1$ for $\pi_1(\Sigma_{1,0})$, any simple closed curve on the torus is determined by an element of $\QQ\cup\{1/0\}$. A geodesic representative of $p/q$ has constant tangent vector; the curve lifts to a line of constant slope $p/q$ in the universal cover $\RR^2$. 

The unit tangent bundle $\UT(\Sigma_{1,0})$ in this case is homeomorphic to $\Sigma_{1,0}\times S^1$. 
For ease of notation, we will write a point $e^{i t}$ in $S^1$ simply as $t$; in this form, two points in $S^1$ are equivalent if they differ by addition of an integer multiple of $2\pi$. 
Then the canonical lift of a curve $\gamma$ of slope $p/q$ is a curve $\gamma \times \{\arctan(p/q)\}$ when oriented with tangent vector pointing towards $e^{i \arctan(p/q)}$ in $\CC$. The curve has two orientations; when oriented in the opposite direction the canonical lift becomes $\gamma\times\{\arctan(p/q) + \pi\}$. Note that in either case, it has constant second coordinate. (Also note this discussion needs to be modified appropriately for $p/q=1/0$; we leave that to the reader.)

Now consider the once-punctured torus, which we denote by $\Sigma_{1,1}$: the genus one surface with one puncture. Consider the abelian cover of the punctured torus; for now we view this as the plane $\RR^2$ with integer lattice points removed. {The line $y=0$ in $\RR^2$ projects to an arc $\mu$ on $\Sigma_{1,1}$ with both endpoints on the puncture. Similarly, the line $x=0$ projects to an arc $\lambda$. }

Consider those simple closed curves on the punctured torus that are parallel to lines in $\RR^2$ of rational slope $p/q$, but disjoint from points on the integer lattice. These lines of rational slope project to closed curves in $\Sigma_{1,1}$ meeting {$\mu$ a total of $|p|$ times, and meeting $\lambda$ a total of $|q|$ times.} We let $p/q$ denote the closed curve. In particular, {a closed curve parallel to $\mu$ is $0/1$, and one parallel to $\lambda$ is $1/0$.} Note these are not all the closed curves in $\Sigma_{1,1}$; we are omitting curves that wrap around the puncture in more complicated ways. However, these are the closed curves we will encounter in this paper. 

Now consider the canonical lifts of such curves. The unit tangent bundle of the punctured torus is homeomorphic to the product $\Sigma_{1,1}\times S^1$. Just as for the closed torus, up to homeomorphism, the canonical lift of a curve of slope $p/q$ in $\UT(\Sigma_{1,1})$ has the form $\gamma \times \{\arctan(p/q)\}$ oriented in one direction, or $\gamma\times\{\arctan(p/q) + \pi\}$ oriented in the other direction. That is, in either case we may isotope $p/q$ in $\Sigma_{1,1}$ to have constant tangent vector. 

In addition to the unit tangent bundle one may consider the projective tangent bundle $\PT(\Sigma_{1,1})$, where one quotients out by the action of $\pm1$ on $S^1$, i.e.\ antipodal points are identified. The unit tangent bundle is a degree two cover of the projective tangent bundle. The two lifts of any fixed geodesic are identified in the quotient, hence an unoriented closed geodesic has a unique lift to the projective tangent bundle. Furthermore, its complement in the projective tangent bundle is covered via a degree two covering map by the complement of both its lifts in the unit tangent bundle.

\begin{lemma}\label{Lem:6FoldCover}
The punctured torus forms a 6-fold cover of the modular surface. The group of covering transformations is generated by a rotation of order three and a rotation of order two. 

Similarly, the unit tangent bundle of the punctured torus forms a 6-fold cover of the unit tangent bundle of the modular surface. The group of covering transformations is generated by two glide rotations, of orders three and two. 
\end{lemma}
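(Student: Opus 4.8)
The plan is to realise this cover algebraically, as the covering of $\SigmaMod=\HH^2/\PSL(2,\ZZ)$ associated to the commutator subgroup $\Gamma':=[\PSL(2,\ZZ),\PSL(2,\ZZ)]$, and to read off the degree and the deck group from the description $\PSL(2,\ZZ)=\langle U\rangle\ast\langle V\rangle\cong\ZZ/2\ast\ZZ/3$ recalled above.

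\emph{The surface.} First I would check that $\Gamma'$ is normal of index $6$ with $\HH^2/\Gamma'\cong\Sigma_{1,1}$. Normality and index are immediate from the abelianisation $\ZZ/2\times\ZZ/3\cong\ZZ/6$ of $\ZZ/2\ast\ZZ/3$. The subgroup $\Gamma'$ is torsion-free: by the classification of torsion in $\PSL(2,\ZZ)$ recalled above, every nontrivial element of finite order is a conjugate of $U$, $V$, or $V^2$; each of these has nontrivial image in $\ZZ/6$, and conjugation acts trivially on $\ZZ/6$, so none lies in $\Gamma'=\ker(\PSL(2,\ZZ)\to\ZZ/6)$. Hence $\Gamma'$ is free and $\HH^2/\Gamma'$ is a genuine hyperbolic surface with $\chi(\HH^2/\Gamma')=6\,\chi^{\mathrm{orb}}(\SigmaMod)=6\cdot(-\tfrac16)=-1$. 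It has a single cusp: every parabolic fixed point of $\PSL(2,\ZZ)$ is equivalent to $\infty$, whose stabiliser is $\langle L\rangle$ with $L=V^2U$, so cusps of $\HH^2/\Gamma'$ correspond to the double cosets $\Gamma'\backslash\PSL(2,\ZZ)/\langle L\rangle$, i.e.\ (by normality) to $(\ZZ/6)/\langle\overline L\rangle$; and the image of $L$ in $\ZZ/2\times\ZZ/3$ is $(1,2)$, of order $6$, so there is only one. The only orientable surface with $\chi=-1$ and one puncture is $\Sigma_{1,1}$ (the alternative, the thrice-punctured sphere, has three cusps). The covering $\HH^2/\Gamma'\to\SigmaMod$ is therefore normal of degree $6$ with deck group $\PSL(2,\ZZ)/\Gamma'\cong\ZZ/6$, generated by the images $\overline U,\overline V$ of $U,V$. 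These act on $\Sigma_{1,1}$ by orientation-preserving isometries of orders $2$ and $3$, and $\overline U$ fixes the image of the $\HH^2$-fixed point $i$ of $U$, a point lying over the order-$2$ cone point of $\SigmaMod$, so $\overline U$ is a rotation by $\pi$; likewise $\overline V$ is a rotation by $2\pi/3$. Since $\ZZ/6$ is generated by one element of order $2$ and one of order $3$, this proves the first paragraph.

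\emph{The unit tangent bundle.} Here I would use $\UT(\HH^2)\cong\PSL(2,\RR)$ (simply transitive left action), under which $\UT(\SigmaMod)=\PSL(2,\ZZ)\backslash\PSL(2,\RR)$ and $\UT(\Sigma_{1,1})=\Gamma'\backslash\PSL(2,\RR)$, with the bundle projections the quotient maps down to $\PSL(2,\ZZ)\backslash\HH^2$ and $\Gamma'\backslash\HH^2$. Both total spaces are honest $3$-manifolds because $\PSL(2,\ZZ)$ acts freely on $\PSL(2,\RR)$ (a group acts freely on itself by left translation; equivalently, an elliptic isometry rotates the tangent circle over its fixed point and so fixes no unit vector --- this is exactly why $\UT(\SigmaMod)$ is the trefoil complement rather than an orbifold). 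Then $\Gamma'\backslash\PSL(2,\RR)\to\PSL(2,\ZZ)\backslash\PSL(2,\RR)$ is normal of degree $6$ with deck group $\PSL(2,\ZZ)/\Gamma'\cong\ZZ/6$, now acting by left translation by representatives of $\overline U$ and $\overline V$; this action is free, covers the rotations $\overline U,\overline V$ of $\Sigma_{1,1}$ via the bundle projection, and its generators again have orders $2$ and $3$. An isometry of the unit tangent bundle that covers a finite-order rotation of the base, rotates the Seifert fibres, and is fixed-point-free is a glide rotation, which gives the second paragraph.

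\emph{Main obstacle.} The one genuinely delicate step is identifying $\HH^2/\Gamma'$ as $\Sigma_{1,1}$ and not the thrice-punctured sphere, which shares both its Euler characteristic and its rank-two free fundamental group; the cusp count above is what rules the latter out (alternatively, one can build a fundamental domain for $\Gamma'$ from six copies of the standard one for $\PSL(2,\ZZ)$ and read off the side-pairings directly). A smaller bookkeeping point worth stating explicitly is that $\overline U,\overline V$ do fix points of $\Sigma_{1,1}$ --- the preimages of the cone points --- which is consistent with $\Sigma_{1,1}\to\SigmaMod$ being an orbifold, not a manifold, covering, whereas on the unit tangent bundles the covering is an ordinary manifold covering with free deck action, which is why the generators there are glide rotations rather than honest rotations.
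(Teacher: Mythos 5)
Your proof is correct, but it takes a genuinely different route from the paper's. You realise the cover algebraically as the one associated to the commutator subgroup $\Gamma'$ of $\PSL(2,\ZZ)\cong\ZZ/2\ast\ZZ/3$, reading off the degree and the cyclic deck group from the abelianisation, and identifying the total space as $\Sigma_{1,1}$ via the orbifold Euler characteristic together with the cusp count (the count via the image of $L=V^2U$ in $\ZZ/6$ is exactly the right way to exclude the thrice-punctured sphere, and you are right to flag that as the delicate step); for the unit tangent bundles you use $\UT(\HH^2)\cong\PSL(2,\RR)$ and the freeness of left translation. The paper instead argues in the opposite direction: it starts from $\Sigma_{1,1}$, presented as the quotient of $\RR^2\setminus\Lambda$ tiled by equilateral triangles, writes down the order-three rotation about a triangle centre and the order-two rotation about an edge midpoint explicitly, and checks that the quotient is $\SigmaMod$ (and, for the bundles, that the quotient by the two explicit glide rotations is Seifert fibred over the correct base orbifold). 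What the paper's construction buys, and what your argument does not by itself supply, is the concrete picture of how the deck transformations act on the triangle tiling and hence on slopes $p/q$; this explicit description is what is invoked later in Lemmas~\ref{Lem:ProjectAndLift} and~\ref{Lem:RotationV}. What your argument buys is a cleaner conceptual explanation of why the degree is $6$ and why the deck group is cyclic of order $6$ (so in particular generated by its order-two and order-three elements), facts the paper's proof establishes only by inspection. Both arguments produce the same cover, since $\Gamma'$ is the unique normal index-six subgroup whose quotient surface is $\Sigma_{1,1}$ rather than $\Sigma_{0,3}$.
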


\begin{proof}
We will study the cover $\Sigma_{1,1} \to \SigmaMod$ by considering first the abelian cover $\RR^2\setminus \Lambda \to \Sigma_{1,1}$, where $\Lambda$ is a lattice, and showing that $\SigmaMod$ is obtained as a further quotient of this space. 

Triangulate $\Sigma_{1,1}$ by adding the edges {$\lambda$ parallel to $1/0$ and $\mu$ parallel to $0/1$ as above}, and an arc parallel to the slope $1/1$. This subdivides $\Sigma_{1,1}$ into two triangles, which we view as equilateral triangles. The abelian cover of $\Sigma_{1,1}$ can then be viewed as obtained by tiling $\RR^2$ by these equilateral triangles, and removing all vertices to form the lattice $\Lambda$. 
We obtain $\Sigma_{1,1}$ by taking the quotient of $\RR^2\setminus\Lambda$ by covering transformations that translate in the direction of $\mu$ and $\lambda$. 

To obtain $\SigmaMod$, we quotient further, first by a rotation by $2\pi/3$, fixing the centre of one of the 
equilateral triangles and rotating
its three vertices {(the second triangle will also be rotated around its centre as a result)}, 
and then by a rotation by $\pi$, fixing the centre of an edge of an equilateral triangle and rotating that edge back to itself, swapping its endpoints {and swapping the two triangles (note this will rotate simultaneously the other two edges about their centres). These two rotations generate a group of order 6,} and the quotient is $\SigmaMod$. See Figure~\ref{Fig:6FoldCover}.

\begin{figure}
\centering
\begin{overpic}[height=4cm]{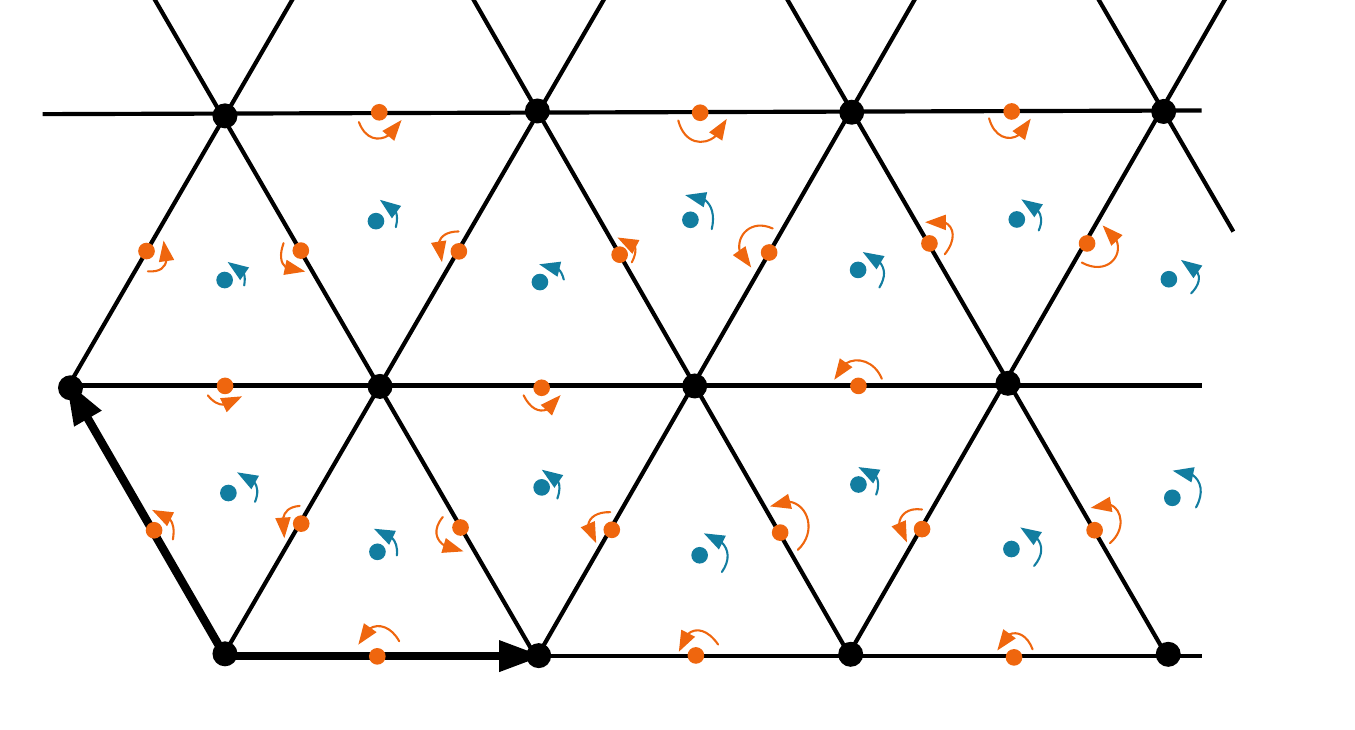}
    \put(2,15){{$\lambda$}}
    \put(25,0){{$\mu$}}
\end{overpic}
\caption{Taking the quotient of $\RR^2\setminus\Lambda$ by translations gives $\Sigma_{1,1}$. Quotient further by $2\pi/3$ rotations about centres of 
triangles, and $\pi$ rotations about centres of edges to obtain $\SigmaMod$. }
\label{Fig:6FoldCover}
\end{figure} 

Now consider the unit tangent bundles. The unit tangent bundle $\UT(\Sigma_{1,1})$ is a trivial product, so it is covered by $(\RR^2\setminus\Lambda)\times\RR$. We obtain $\UT(\Sigma_{1,1})$ by taking the quotient by translations on $\RR^2$ in the directions of $\mu$ and $\lambda$, and by a translation $(x,y, 0)\mapsto (x,y, 2\pi)$ in the $\RR$ direction. 

To obtain $\UT(\SigmaMod)$, further quotient by a covering transformation of order three, and one of order two. The first is the glide rotation $V$ that rotates an equilateral triangle in $\RR^2$ by $2\pi/3$ about its centre, and translates it in the $\RR$ direction by $2\pi/3$. Then $V$ has order three in $\Sigma_{1,1}\times S^1 = \UT(\Sigma_{1,1})$. 
The second is the glide rotation $U$ that rotates $\RR^2$ by $\pi$ in the centre of an edge of an equilateral triangle, and shifts in the $\RR$ direction by $\pi$. This has order two in $\UT(\Sigma_{1,1})$. Observe it takes the canonical lift of an oriented curve in $\Sigma_{1,1}$ to the canonical lift of the oppositely oriented curve.

We claim that the quotient of $\UT(\Sigma_{1,1})$ by $U$ and $V$ is $\UT(\SigmaMod)$. To see this, note that the quotient is Seifert fibred, with base orbifold a sphere with one cusp, one cone point of order two, and one cone point of order three. This is homeomorphic to $\UT(\SigmaMod)$. 
\end{proof}

\begin{remark}\label{rem: punctured surfaces}
More generally, any orientable 
hyperbolic surface with  at least one puncture can be tiled by ideal triangles. There is then a hyperbolic structure that allows us to identify its fundamental domain with a finite portion of the Farey tessellation of $\HH^2$. Since the modular group $\PSL_2(\RR)$ is the full symmetry group of the tessellation, this yields a representation of the surface's fundamental group as a subgroup of the modular group of finite index, and the surface is therefore a branched surface of $\SigmaMod$. Thus one can consider lifts of modular geodesics to any such surface, and as the unit tangent bundle is always trivial in this case,
if the lift is simple the situation will be similar.
\end{remark}

\subsection{Curves on the once-punctured torus and the Farey tessellation}

Isotopy classes of simple closed curves on the punctured torus are organised by the same Farey tessellation. Recall that the Farey complex can be considered as $\HH^2$ with boundary $\RR\cup \{1/0\}$. Isotopy classes of simple closed curves on $\Sigma_{1,1}$ correspond to points in $\QQ\cup \{1/0\}$. The geometric intersection number of curves $a/b$ and $c/d$ is given by $|ad-bc|$. When $a/b$ and $c/d$ intersect exactly once, 
they correspond to an edge in the Farey complex: a hyperbolic geodesic running from $a/b$ in $\QQ\cup \{1/0\}$ to $c/d$ in $\QQ\cup\{1/0\}$. 
We say such curves are \emph{Farey neighbours}. 
The matrix
$
\begin{pmatrix}
    a & c \\
    b & d 
\end{pmatrix}
$ 
in $\PSL(2,\ZZ)$ takes the edge between $1/0$ and $0/1$ to the edge between $a/b$ and $c/d$ in $\HH^2$. 

\begin{definition}\label{Def:FareyNeighbours}
We say an ordered collection of simple closed curves $\alpha_1, \dots, \alpha_n$ in $\Sigma_{1,1}$ are \emph{Farey neighbours} if each $\alpha_j$ and $\alpha_{j+1}$ are connected by an edge of the Farey triangulation, for $j=1, \dots, n-1$, and if $\alpha_n$ and $\alpha_1$ are also connected by an edge of the Farey triangulation. 
\end{definition}

\section{Arithmetic {K}leinian groups}

Let $K$ be a link in a compact $3$-manifold with torus boundary. Suppose that the interior of the complement has a complete hyperbolic structure, meaning it is isometric to $\HH^3/G$, where $\HH^3$ is the hyperbolic $3$-space and $G$ is a torsion-free, non-cocompact Kleinian group of finite covolume. The following definition of arithmeticity is a consequence of~\cite[Theorem 9.2.2]{MaclachlanReid}.

\begin{definition}\label{Def:Arithmetic} 
A non-cocompact Kleinian group is \emph{arithmetic} if it is conjugate in $\PSL(2,\CC)$ to a group commensurable with $\PSL_2(O_d)$, where $O_d$ is the ring of integers in the imaginary quadratic number field $\QQ(\sqrt{-d})$, with $d$ a positive integer. 
Such a group $\PSL(2,O_d)$ is called a \emph{Bianchi group}. We say that a hyperbolic 3-manifold is \emph{arithmetic} if the corresponding Kleinian group is arithmetic. Similarly, a knot or link with arithmetic complement is said to be arithmetic. 
\end{definition} 

An example of an Bianchi group is the group $\PSL(2,\mathbb{Z}[i])$, called the Picard group.
The Picard group is generated by face pairings of a fundamental region
\[ \mathcal{F} =\{(x,y,t)\in \HH^3 \mid x^2+y^2+t^2\geq 1, -1/2 \leq x \leq 1/2, 0 \leq y \leq 1/2\};
\]
see~\cite[\S~1.4.1]{MaclachlanReid}. This is a quotient of a regular ideal octahedron. 
In fact, analogous to the two dimensional case, $\HH^3$ is tessellated by regular ideal octahedra, with ideal vertices at all points of $\QQ[i]$. The Picard group $\PSL(2,\ZZ[i])$ is a subgroup of index two of the full symmetry group of this tessellation. Thus we have the following well-known result; see \cite[\S~9.4]{MaclachlanReid}, \cite{NeumannReid:Orbifolds} and also \cite{abe2013geometry}. 

\begin{lemma}\label{Lem:OctahedraArithmetic}
Any finite volume hyperbolic 3-manifold obtained by gluing regular ideal octahedra is arithmetic. 
\end{lemma}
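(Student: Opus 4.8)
The plan is to exploit the fact, recalled just before the statement, that $\HH^3$ admits a tessellation $\mathcal{T}_{oct}$ by regular ideal octahedra with ideal vertices exactly at $\QQ(i)\cup\{\infty\}$, and that the Picard group $\PSL(2,\ZZ[i])$ sits as an index-two subgroup of the full symmetry group $\mathrm{Sym}(\mathcal{T}_{oct})$ of that tessellation. So suppose $M=\HH^3/G$ is a finite-volume hyperbolic $3$-manifold decomposed as a union of regular ideal octahedra glued along faces. First I would lift the cell structure to the universal cover: the developing map carries the ideal octahedra of $M$ to a $G$-invariant tiling of $\HH^3$ by regular ideal octahedra. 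Since any two regular ideal octahedra in $\HH^3$ are isometric, and an isometry of $\HH^3$ taking one such octahedron to another in a face-to-face fashion must (after composing with an element of the octahedron's own symmetry group) take the standard tessellation to the standard tessellation, this lifted tiling is isometric to $\mathcal{T}_{oct}$. Concretely, after conjugating $G$ in $\PSL(2,\CC)$ I may assume the developed tiling is exactly $\mathcal{T}_{oct}$.

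With that normalisation in place, the key step is that $G$ preserves $\mathcal{T}_{oct}$: each $g\in G$, being a deck transformation, permutes the lifted octahedra and respects the face identifications, so $g\in\mathrm{Sym}(\mathcal{T}_{oct})$. Hence $G\le\mathrm{Sym}(\mathcal{T}_{oct})$. Since $\PSL(2,\ZZ[i])$ has index two in $\mathrm{Sym}(\mathcal{T}_{oct})$, the intersection $G\cap\PSL(2,\ZZ[i])$ has index at most two in $G$; equivalently, $G$ is commensurable with the Picard group. A non-cocompact Kleinian group commensurable with $\PSL(2,\ZZ[i])=\PSL(2,O_1)$ is, by Definition~\ref{Def:Arithmetic}, arithmetic (here $d=1$). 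Since $M$ has finite volume and is built from finitely many ideal octahedra, $G$ has finite covolume and is non-cocompact (the ideal vertices produce cusps), so the hypotheses of that definition are met, and $M$ is arithmetic.

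I expect the main obstacle to be the normalisation step, i.e.\ checking rigorously that the $G$-invariant tiling obtained by developing the octahedral decomposition is genuinely the standard tessellation $\mathcal{T}_{oct}$ up to an isometry of $\HH^3$, rather than merely a tiling by isometric copies of the regular ideal octahedron. The point is that the regular ideal octahedron tiles $\HH^3$ in an essentially unique way once one insists on face-to-face gluings: the dihedral angle of a regular ideal octahedron along each edge is $\pi/2$, so exactly four octahedra meet around every edge and the combinatorics of the tiling is forced; a local-to-global (developing) argument then shows any two face-to-face tilings by regular ideal octahedra differ by an isometry. Granting this rigidity, which is exactly the content of the references \cite[\S~9.4]{MaclachlanReid} and \cite{NeumannReid:Orbifolds} cited with the statement, the rest is the short commensurability computation above. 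One should also note the minor point that the face identifications of $M$ need not individually extend to symmetries of a single octahedron, but they do extend to isometries of $\HH^3$ carrying one octahedron of the developed tiling to a neighbour, which is all that is used; and that edge and vertex cycle conditions for $M$ to be a manifold are automatically consistent with the $\pi/2$ dihedral angles, so no extra hypothesis is needed beyond finite volume.
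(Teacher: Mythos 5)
Your proposal is correct and follows essentially the same route the paper intends: the paper states the lemma as a well-known consequence of the preceding paragraph (the tessellation of $\HH^3$ by regular ideal octahedra with $\PSL(2,\ZZ[i])$ of index two in its symmetry group) together with the cited references, and your developing-map plus commensurability argument is exactly that outline filled in. The rigidity point you isolate (uniqueness of the face-to-face octahedral tessellation, forced by the $\pi/2$ dihedral angles) is indeed the only substantive step, and you handle it correctly.
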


Note that arithmeticity is preserved by taking finite covers or quotients; any space that is finitely covered by such a space is also arithmetic.

\section{Regular octahedra for neighboring slopes}

We now return to curves on the punctured torus $\Sigma_{1,1}$. In this section, we build arithmetic links in $\UT(\Sigma_{1,1})$. 

\begin{lemma}\label{Lem:Octahedron}
Suppose $\alpha$ and $\beta$ are two simple closed curves on the punctured torus $\Sigma_{1,1}$ that share an edge in the Farey triangulation. Let $N_{\alpha,\beta}$ denote the space obtained from  $\Sigma_{1,1}\times [0,1]$ by removing $\alpha$ from $\Sigma_{1,1}\times\{0\}$ and removing $\beta$ from $\Sigma_{1,1}\times\{1\}$. Then $N_{\alpha,\beta}$ admits a complete hyperbolic structure obtained by gluing in pairs the {eight} faces of a regular ideal octahedron. 
\end{lemma}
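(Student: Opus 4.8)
The plan is to realize $N_{\alpha,\beta}$ explicitly as the complement of a specific link in $\Sigma_{1,1}\times[0,1]$ and identify that complement with a known octahedral manifold. First I would normalize: since $\alpha$ and $\beta$ are Farey neighbours, there is an element of the mapping class group of $\Sigma_{1,1}$ (realized by an element of $\PSL(2,\ZZ)$ acting on slopes) taking the pair $(\alpha,\beta)$ to the standard pair $(0/1, 1/0)$, i.e.\ to $(\mu,\lambda)$-type curves. Because such a mapping class extends to a self-homeomorphism of $\Sigma_{1,1}\times[0,1]$ carrying the relevant curves correctly, it suffices to prove the lemma for one representative pair, say $\alpha = 0/1$ on the bottom and $\beta = 1/0$ on the top.

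Next I would give a concrete polyhedral decomposition. Viewing $\Sigma_{1,1}$ as a square with opposite sides identified and the puncture at the corner, $\Sigma_{1,1}\times[0,1]$ is a cube-like block; the curve $\alpha$ sits on the bottom face and $\beta$ on the top, and by the Farey-neighbour condition $\alpha$ and $\beta$ intersect once, so their projections to $\Sigma_{1,1}$ cross in a single point. I would cut $\Sigma_{1,1}\times[0,1]$ along vertical annuli over $\mu$ and $\lambda$ (the arcs to the puncture) to reduce to a single ideal cell, and track how $\alpha\times\{0\}$ and $\beta\times\{1\}$ become edges. The expected outcome is that $N_{\alpha,\beta}$ decomposes into exactly one ideal octahedron: its eight triangular faces pair up in four pairs under the side identifications, its six ideal vertices correspond to the puncture of $\Sigma_{1,1}$ (accounting for the cusp), to the two removed curves $\alpha$ and $\beta$ (which become cusps), and to the two boundary tori $\Sigma_{1,1}\times\{0,1\}$ pierced by the removed curves. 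Imposing that this octahedron be the \emph{regular} ideal octahedron makes all dihedral angles $\pi/2$, so the angle sum around each edge is $2\pi$ and the gluing is a genuine complete hyperbolic structure; this is verified by checking edge-cycle and completeness (horospherical cross-section) equations, which for the regular octahedron are automatic by symmetry.

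An alternative, and probably cleaner, route is to recognize $N_{\alpha,\beta}$ directly: the complement in $\Sigma_{1,1}\times S^1$-type pieces of two "perpendicular" curves on consecutive levels is the kind of manifold that appears as a $2$-bridge or chain-link complement. Indeed for $n=1$ the main theorem already asserts the answer is the Whitehead link complement, built from one regular ideal octahedron, so for the building-block lemma I would aim to show $N_{0/1,1/0}$ is homeomorphic to the Whitehead link complement (or the $6^3_1$ chain link complement, depending on the precise boundary pattern), and invoke the classical octahedral decompositions of those. Concretely: collapse $[0,1]$ and observe that removing $\alpha$ and a parallel copy of $\beta$ from $\Sigma_{1,1}$ thickened is the same as removing a two-component link from a genus-one handlebody cross interval, then use a sequence of isotopies to bring it to the standard Whitehead position.

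The main obstacle I anticipate is bookkeeping the ideal vertices and the face-pairing pattern correctly — in particular making sure the puncture of $\Sigma_{1,1}$, the two curve-cusps, and the two level-surface boundary components are all accounted for by the six ideal vertices of a single octahedron without over- or under-counting, and confirming that the combinatorial gluing really is that of the regular ideal octahedron rather than some other (possibly non-geometric) octahedral complex. Once the face-pairing is pinned down, regularity and hence hyperbolicity and completeness follow from the symmetry of the ideal octahedron, so the real work is the topological identification of the decomposition.
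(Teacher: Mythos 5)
Your overall strategy --- normalize the pair $(\alpha,\beta)$ to $(0/1,1/0)$ by an element of $\PSL(2,\ZZ)$ acting on $\Sigma_{1,1}\times[0,1]$, then exhibit an explicit one-octahedron decomposition of the standard block --- is exactly the paper's approach (the paper does the base case first, citing a known decomposition, and then applies the linear automorphism $\bigl(\begin{smallmatrix} p & r\\ q & s\end{smallmatrix}\bigr)$). However, the concrete combinatorics you propose has a genuine error that would derail the verification step. You assert that the eight faces pair up into four pairs and that the angle sum around each edge is $2\pi$. This cannot happen: $N_{\alpha,\beta}$ has two boundary surfaces, namely $(\Sigma_{1,1}\times\{0\})\setminus\alpha$ and $(\Sigma_{1,1}\times\{1\})\setminus\beta$, each a once-punctured annulus, and a manifold obtained by gluing all eight faces of an ideal polyhedron in pairs has only cusps, no such boundary. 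In the correct decomposition only four faces are glued in two pairs (front to back and left to right, induced by the side identifications of the fundamental domain); the remaining four faces stay unglued and become the two totally geodesic once-punctured annuli on the boundary. Consequently each edge class contains two edges of the octahedron, the angle sum is $\pi$ rather than $2\pi$, and the ``complete hyperbolic structure'' is one with totally geodesic boundary --- which is precisely what is needed later, when adjacent blocks are glued along these annuli. Your ideal-vertex count is also off for the same reason: the six ideal vertices are the two drilled curves (top and bottom vertices) plus four vertices all corresponding to the puncture of $\Sigma_{1,1}$; the level surfaces $\Sigma_{1,1}\times\{0,1\}$ are boundary, not ideal vertices.

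Your alternative route is also based on a misidentification. $N_{0/1,1/0}$ is not the Whitehead link complement (nor a chain link complement): those are cusped manifolds without boundary, whereas the block has geodesic boundary. The Whitehead link complement only appears after one glues blocks around the full $S^1$ direction and then passes to the six-fold quotient down to $\UT(\SigmaMod)$; trying to prove the block lemma by appealing to it is both incorrect and circular. To your credit, you flagged the bookkeeping of vertices and face pairings as the main risk --- that risk is real, and resolving it in the direction you guessed would not work.
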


\begin{proof}
When $\alpha=1/0$ and $\beta=0/1$, this is well known and is illustrated in Figure~\ref{Fig:Octahedron}; see, for example, \cite[Lemma~2.4]{HuiPurcell:RodPackings}. 
On the left of that figure, $\Sigma_{1,1}\times [0,1]$ is obtained by gluing the front face to the back, and the left face to the right. 

On the right of the figure, observe that this gluing now identifies the front and back triangles opposite each other across the ideal vertex at the top of the octahedron, and the left and right triangles opposite each other across the ideal vertex at the bottom of the octahedron. If we give the ideal octahedron the hyperbolic geometry of a regular ideal octahedron, then each edge is identified to two edges of the ideal octahedron. The {remaining unglued }top and bottom faces become totally geodesic once-punctured annuli.

\begin{figure}
    \centering
    \includegraphics{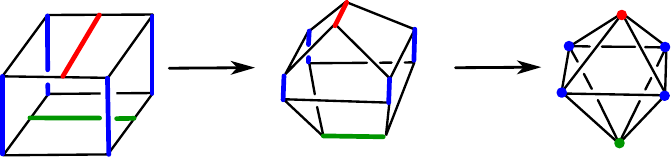}
    \caption{Starting on the left with $\Sigma_{1,1}\times [0,1]$ with $\alpha=1/0$ drilled from $\Sigma_{1,1}\times\{0\}$ and $\beta=0/1$ drilled from $\Sigma_{1,1}\times\{1\}$, obtain a regular ideal octahedron on the right. }
    \label{Fig:Octahedron}
\end{figure}

For general $\alpha=p/q$ and $\beta=r/s$, $\alpha$ and $\beta$ are Farey neighbours when $|ps-qr|=1$. In this case there exists a homeomorphism from $N_{0,\infty=1/0}$ to $N_{\alpha, \beta}$ induced by the action of the linear automorphism 
$\begin{pmatrix}
    p & r \\
    q & s
\end{pmatrix}$
taking $\Sigma_{1,1}\times\{t\}$ to $\Sigma_{1,1}\times\{t\}$ for all $t$, and taking $(\Sigma_{1,1}\times\{0\})\setminus\{1/0\}$ to $(\Sigma_{1,1}\times\{0\})\setminus\alpha$ and $(\Sigma_{1,1}\times\{1\})\setminus\{0/1\}$ to $(\Sigma_{1,1}\times\{1\})\setminus\beta$. This can be realised by a hyperbolic isometry. 
\end{proof}

\begin{theorem}\label{Thm:FareyPath}
Let $\alpha_1, \dots, \alpha_n$ be simple closed curves in $\Sigma_{1,1}$ that are Farey neighbours. Drill $\Sigma_{1,1}\times S^1$ by removing $\alpha_j$ from $\Sigma_{1,1}\times \{ j/n\}$. The resulting manifold has a complete hyperbolic structure obtained by gluing $n$ regular ideal octahedra. 
\end{theorem}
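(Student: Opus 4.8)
The plan is to build the hyperbolic structure on the drilled manifold $M = (\Sigma_{1,1}\times S^1) \setminus \bigcup_{j=1}^n \widehat{\alpha_j}$ by decomposing it along level surfaces into the pieces $N_{\alpha_j,\alpha_{j+1}}$ of Lemma~\ref{Lem:Octahedron} and checking that the hyperbolic structures on those pieces glue up coherently. First I would cut $\Sigma_{1,1}\times S^1$ along the level tori $\Sigma_{1,1}\times\{(j+1/2)/n\}$ for $j=1,\dots,n$ (indices mod $n$), obtaining $n$ product pieces, the $j$-th of which is homeomorphic to $\Sigma_{1,1}\times[0,1]$ and contains exactly one drilled curve, namely $\alpha_j$ sitting in an interior level $\Sigma_{1,1}\times\{j/n\}$. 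Since $\alpha_j$ lies in the interior rather than on a boundary level, I would first re-slice: the piece between consecutive drilled levels $\Sigma_{1,1}\times\{j/n\}$ and $\Sigma_{1,1}\times\{(j+1)/n\}$ is a copy of $\Sigma_{1,1}\times[0,1]$ with $\alpha_j$ removed from one boundary level and $\alpha_{j+1}$ removed from the other; this is exactly $N_{\alpha_j,\alpha_{j+1}}$ in the notation of Lemma~\ref{Lem:Octahedron}, because $\alpha_j$ and $\alpha_{j+1}$ are Farey neighbours by hypothesis. By Lemma~\ref{Lem:Octahedron} each such piece carries a complete finite-volume hyperbolic structure realized by a regular ideal octahedron with two faces glued in pairs, and with its two remaining boundary faces forming totally geodesic once-punctured annuli. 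Cyclically there are $n$ such pieces $N_{\alpha_1,\alpha_2}, N_{\alpha_2,\alpha_3}, \dots, N_{\alpha_n,\alpha_1}$, giving $n$ regular ideal octahedra in total.

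The next step is to glue these $n$ pieces back together along their totally geodesic once-punctured-annulus boundary components and verify the result is the complete hyperbolic structure on $M$. The boundary of $N_{\alpha_j,\alpha_{j+1}}$ consists of two once-punctured annuli: one is a neighbourhood of the level $\Sigma_{1,1}\times\{j/n\}$ cut along $\alpha_j$, the other is a neighbourhood of $\Sigma_{1,1}\times\{(j+1)/n\}$ cut along $\alpha_{j+1}$. When we reassemble $\Sigma_{1,1}\times S^1$, the $\alpha_{j+1}$-boundary of $N_{\alpha_j,\alpha_{j+1}}$ is glued to the $\alpha_{j+1}$-boundary of $N_{\alpha_{j+1},\alpha_{j+2}}$. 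To make the glued-up metric hyperbolic I need these two totally geodesic once-punctured annuli to be isometric, and the gluing map to be realized by an isometry matching them. This is where the Farey-neighbour structure does the work: using the linear-automorphism normalization from the proof of Lemma~\ref{Lem:Octahedron}, I can conjugate so that the shared curve $\alpha_{j+1}$ becomes the standard slope, and then both annuli are the standard totally geodesic once-punctured annulus appearing as the "top/bottom face" of the regular ideal octahedron in Figure~\ref{Fig:Octahedron}. Since the regular ideal octahedron is unique up to isometry and its once-punctured-annulus faces are symmetric, the gluing map extends to an isometry across each seam, so Mostow/the developing map argument gives a well-defined complete hyperbolic structure on $M$ tiled by the $n$ octahedra. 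Finally I would check completeness directly: the only cusps of $M$ come from the puncture of $\Sigma_{1,1}$ (crossed with $S^1$) and from the drilled curves $\widehat{\alpha_j}$; each drilled curve contributes a cusp that is visibly complete because in each $N_{\alpha_j,\alpha_{j+1}}$ the removed curve sits at an ideal vertex of the octahedron, and the puncture cusp is complete as in the standard once-punctured-torus-bundle picture.

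The main obstacle I expect is the gluing-consistency / non-shearing step: verifying that when the two totally geodesic once-punctured annuli from adjacent octahedra are identified, the identification is an \emph{isometry} and not merely a homeomorphism, equivalently that there is no shearing singularity along the seam and the holonomy around the puncture cusp remains parabolic after all $n$ gluings. Concretely this amounts to tracking how the normalizing linear automorphisms $\begin{pmatrix} p_j & p_{j+1} \\ q_j & q_{j+1}\end{pmatrix}$ compose around the cyclic sequence of Farey neighbours and confirming the resulting cusp shape is consistent; the Farey-neighbour hypothesis $|p_j q_{j+1} - q_j p_{j+1}| = 1$ is exactly what guarantees each transition matrix lies in $\PSL(2,\ZZ)$ and hence respects the octahedral tessellation of $\HH^3$ by regular ideal octahedra (Lemma~\ref{Lem:OctahedraArithmetic}'s tessellation), so the seams are automatically geodesic and the gluing is by a symmetry of that tessellation. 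Once that is in hand, uniqueness of the complete structure (Mostow rigidity) identifies it with the structure on $M$, and the count of $n$ octahedra is immediate.
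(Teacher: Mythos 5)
Your decomposition is exactly the paper's: cut $\Sigma_{1,1}\times S^1$ along the drilled levels into blocks $N_{\alpha_j,\alpha_{j+1}}$, apply Lemma~\ref{Lem:Octahedron} to make each a regular ideal octahedron with two totally geodesic once-punctured-annulus boundary faces, and reglue top to bottom. Two of your justifications, however, need repair. First, the reason the regluing is by isometry is simpler and cleaner than your appeal to symmetry of the octahedron and to the normalizing matrices: a once-punctured annulus is a thrice-punctured sphere, which carries a unique complete hyperbolic structure, so the identity gluing between the two copies bounding the curve $\alpha_{j+1}$ is automatically realized by an isometry. Your alternative claim that the transition matrices lie in $\PSL(2,\ZZ)$ and therefore ``respect the octahedral tessellation of $\HH^3$'' is a non sequitur: those matrices act on the surface $\Sigma_{1,1}$ (equivalently on the Farey tessellation of $\HH^2$), not on the tessellation of $\HH^3$ by regular ideal octahedra, whose symmetry group is commensurable with $\PSL(2,\ZZ[i])$.

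Second, and more substantively, your completeness argument is not a proof. Saying a drilled curve's cusp is ``visibly complete because the removed curve sits at an ideal vertex of the octahedron'' begs the question: incompleteness at a cusp is precisely the failure mode one must rule out when assembling ideal polyhedra (it is how hyperbolic Dehn filling arises), and it is entirely compatible with the cusp sitting at ideal vertices. You correctly identify that one must check the holonomy of each cusp remains parabolic after all $n$ gluings, but you do not carry out the check. The paper does this via the Poincar\'e polyhedron theorem: each edge of the glued-up complex is $4$-valent, so four dihedral angles of $\pi/2$ close up to $2\pi$ and one gets a (possibly incomplete) structure; then the cross-section of each cusp is tiled by the square vertex links of the regular ideal octahedra, and since squares assemble into a genuine Euclidean (not merely similarity) structure on each horospherical torus, the metric is complete. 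Some version of this cusp cross-section computation, or an argument that gluing complete hyperbolic manifolds along isometric totally geodesic boundary preserves completeness, is needed to finish your proof.
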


\begin{proof}
Cut the drilled manifold along each surface $\Sigma_{1,1}\times \{j/n\}$. Obtain blocks of the form $N_{\alpha_j, \alpha_{j+1}}$. By Lemma~\ref{Lem:Octahedron}, each of these can be given the hyperbolic structure of a regular ideal octahedron, with two top faces unglued and two bottom faces unglued. 

Glue the top faces of $N_{\alpha_j, \alpha_{j+1}}$ to the bottom faces of $N_{\alpha_{j+1}, \alpha_{j+2}}$ for $j=1, \dots, n$ modulo $n$. The gluing will be by the identity, along totally geodesic once-punctured annuli. These have a unique hyperbolic structure, hence the gluing is by isometry. 

We claim this gives a complete hyperbolic structure on the original drilled manifold. The proof is by the Poincar{\'e} polyhedron theorem; see Epstein and Petronio~\cite{EpsteinPetronio} for a careful exposition. 
The gluing identifies blocks $N_{\alpha_j, \alpha_{j+1}}$ top to bottom, yielding a manifold homemomorphic to the desired manifold. Under the gluing, each edge is 4-valent. 
Thus when edges are glued, the monodromy around any edge is the identity: formed by gluing four right dihedral angles. This is sufficient to ensure that the manifold has a (possibly incomplete) hyperbolic structure. For completeness, notice that in the boundary of a horoball neighbourhood of any cusp, we identify a sequence of truncated neighbourhoods of the ideal vertices; these are squares. The squares are glued to obtain a tiling of the horospherical torus. Thus the regular ideal octahedra induce a Euclidean structure on each cusp. It follows that the hyperbolic metric obtained from the octahedra is a complete metric on the drilled manifold; see also~\cite[Theorem~4.10]{Purcell:HypKnotTheory}. 
\end{proof}

We wish to apply Theorem~\ref{Thm:FareyPath} to a result about canonical lifts of Farey neighbours in the unit tangent bundle $\UT(\Sigma_{1,1})$. However, we need to take some care in orienting the curves. As noted above, each curve $\gamma=p/q$ has two orientations. For one orientation, the canonical lift $\widehat{\gamma}$ will lie in $\Sigma_{1,1}\times\{\arctan(p/q)\}$ and the other will lie in $\Sigma_{1,1}\times\{\arctan(p/q)+\pi\}$.
The canonical lift $\widetilde{\gamma}$ to the projective tangent bundle $\PT{\Sigma_{1,1}}$ (which is the same trivial bundle $\Sigma_{1,1}\times S^1$) is well defined.

\begin{theorem}\label{Thm:CanonicalLiftFareyNeighbours}
Let $\Gamma:=\{{\gamma_j = a_j/b_j}\}_{j=1}^n$ be a collection of simple closed geodesics on the punctured torus made of Farey neighbours, with each $\gamma_j$ oriented in the direction of $\exp(i\arctan(a_j/b_j))$. Let $\overline{\Gamma}:=\{\overline{\gamma_j}\}_{j=1}^n$ be the same collection, with each curve oriented in the opposite direction. Then:
\begin{enumerate}
\item $\UT(\Sigma_{1,1})\setminus{\widehat{{\Gamma}}}\cong\UT(\Sigma_{1,1})\setminus{\widehat{\overline{\Gamma}}}\cong\PT(\Sigma_{1,1})\setminus{\widetilde{\Gamma}}$ is arithmetic, obtained by gluing $n$ regular ideal octahedra.
\item $\UT(\Sigma_{1,1})\setminus(\widehat{\Gamma} \cup \widehat{\overline{\Gamma}})$ is arithmetic, obtained by gluing $2n$ regular ideal octahedra.
\end{enumerate}
\end{theorem}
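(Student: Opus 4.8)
The plan is to show that each of the four manifolds in the statement is homeomorphic to one of the manifolds produced by Theorem~\ref{Thm:FareyPath}, and then to invoke Lemma~\ref{Lem:OctahedraArithmetic}. First I would set $\theta_j := \arctan(a_j/b_j)\in(-\pi/2,\pi/2]$ (with the convention $\arctan(1/0)=\pi/2$) and recall from Section~\ref{Sec:Surfaces} that, after an isotopy, $\widehat{\Gamma}=\bigcup_j \gamma_j\times\{\theta_j\}$ and $\widehat{\overline\Gamma}=\bigcup_j\gamma_j\times\{\theta_j+\pi\}$ in $\UT(\Sigma_{1,1})=\Sigma_{1,1}\times S^1$, while $\widetilde\Gamma$ is the image of each of these under the degree-two cover $\UT(\Sigma_{1,1})\to\PT(\Sigma_{1,1})=\Sigma_{1,1}\times(\RR/\pi\ZZ)$, namely $\bigcup_j\gamma_j\times\{\theta_j\bmod\pi\}$. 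Since the $\gamma_j$ are pairwise distinct simple closed geodesics their slopes $a_j/b_j$ are distinct, so these heights are distinct and each of $\widehat\Gamma$, $\widehat{\overline\Gamma}$, $\widehat\Gamma\cup\widehat{\overline\Gamma}$, $\widetilde\Gamma$ is an embedded link. I would also record the elementary observation that, $\arctan$ being strictly increasing, $p/q\mapsto\arctan(p/q)$ is a cyclic-order-preserving injection of $\partial\HH^2=\QQ\cup\{1/0\}$ into $S^1$, which descends to an orientation-preserving homeomorphism $\partial\HH^2\to\RR/\pi\ZZ$.

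The main point I would then establish is the cyclic order in which the heights $\theta_1,\dots,\theta_n$ occur around the fibre circle: I claim it is exactly the cyclic order $\gamma_1,\gamma_2,\dots,\gamma_n$ along which the curves are Farey-adjacent. To see this I would observe that the $n$ Farey edges $[\gamma_j,\gamma_{j+1}]$ (subscripts mod $n$) are edges of the Farey tessellation of $\HH^2$, hence pairwise non-crossing, and — the $\gamma_j$ being distinct — meet only at shared endpoints; their union is therefore an embedded ideal polygon, that is, a Jordan curve in $\HH^2\cup\partial\HH^2$ whose ideal vertices, read off along it, are $\gamma_1,\dots,\gamma_n$. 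Since the boundary points of a Jordan curve in a closed disk appear on the boundary circle in the cyclic order in which the curve visits them, $\gamma_1,\dots,\gamma_n$ occur in this order around $\partial\HH^2$, and by the first paragraph the heights $\theta_j$ (respectively the classes $\theta_j\bmod\pi$) inherit this cyclic order on $S^1$ (respectively on $\RR/\pi\ZZ$). After relabelling the $\gamma_j$ cyclically and, if necessary, reversing their order — operations that preserve the cyclic Farey-neighbour property and do not affect the hypothesis on orientations — I may therefore assume $\theta_1<\theta_2<\dots<\theta_n$ inside the arc $(-\pi/2,\pi/2]$.

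With this in place, for part (1) I would choose an orientation-preserving self-homeomorphism of $S^1$ sending $\theta_j$ to $j/n$ for each $j$; crossed with the identity on $\Sigma_{1,1}$ it carries $\UT(\Sigma_{1,1})\setminus\widehat\Gamma$ onto the manifold that Theorem~\ref{Thm:FareyPath} associates to the Farey-neighbour sequence $\gamma_1,\dots,\gamma_n$, which is thus glued from $n$ regular ideal octahedra and arithmetic by Lemma~\ref{Lem:OctahedraArithmetic}. The flip $(x,t)\mapsto(x,t+\pi)$ of $\Sigma_{1,1}\times S^1$ carries $\widehat\Gamma$ to $\widehat{\overline\Gamma}$, giving the first homeomorphism, and running the same circle-reparametrisation argument inside $\PT(\Sigma_{1,1})$ identifies $\PT(\Sigma_{1,1})\setminus\widetilde\Gamma$ with the same Theorem~\ref{Thm:FareyPath} manifold. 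For part (2), $\UT(\Sigma_{1,1})\setminus(\widehat\Gamma\cup\widehat{\overline\Gamma})$ is $\Sigma_{1,1}\times S^1$ drilled along $\gamma_j$ at both $\theta_j$ and $\theta_j+\pi$; listing the $2n$ heights cyclically gives $\theta_1<\dots<\theta_n<\theta_1+\pi<\dots<\theta_n+\pi$, so the corresponding $2n$-term cyclic list of curves is $\gamma_1,\dots,\gamma_n,\gamma_1,\dots,\gamma_n$, in which every cyclically consecutive pair is $(\gamma_j,\gamma_{j+1})$ for indices mod $n$, hence a pair of distinct Farey neighbours (here one uses $n\ge 2$, which holds for any cyclic Farey-neighbour family). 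Theorem~\ref{Thm:FareyPath}, applied after equally spacing the $2n$ heights, then exhibits this manifold as a union of $2n$ regular ideal octahedra, so it is arithmetic by Lemma~\ref{Lem:OctahedraArithmetic}; equivalently, $\widehat\Gamma\cup\widehat{\overline\Gamma}$ is the full preimage of $\widetilde\Gamma$ under the degree-two cover $\UT(\Sigma_{1,1})\to\PT(\Sigma_{1,1})$, so this manifold doubly covers $\PT(\Sigma_{1,1})\setminus\widetilde\Gamma$ and one lifts the octahedral decomposition through the cover.

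The one genuinely substantive step is the cyclic-order claim of the second paragraph: because Theorem~\ref{Thm:FareyPath} drills at prescribed, evenly spaced heights in a fixed order, one must know in advance that placing the $j$-th canonical lift at height $\arctan(a_j/b_j)$ is compatible with the Farey-adjacency cycle of the curves. This rests on the combinatorics of the Farey tessellation — non-crossing edges force an embedded ideal polygon, whose ideal vertices therefore appear on $\partial\HH^2$ in cyclic order — together with the monotonicity of $\arctan$; everything else is routine manipulation of product bundles and finite covers.
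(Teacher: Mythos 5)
Your proof is correct and follows essentially the same route as the paper: relabel so that the heights $\arctan(a_j/b_j)$ occur in the cyclic order of the Farey-neighbour sequence, reduce to Theorem~\ref{Thm:FareyPath} by a fibre-preserving reparametrisation of the $S^1$ factor, conclude arithmeticity from Lemma~\ref{Lem:OctahedraArithmetic}, and handle $\PT(\Sigma_{1,1})$ and the doubled collection $\widehat{\Gamma}\cup\widehat{\overline{\Gamma}}$ in the same way. The only difference is that you explicitly justify, via the embedded ideal polygon formed by the pairwise non-crossing Farey edges, that the slope ordering agrees with the Farey-adjacency cyclic ordering --- a compatibility the paper's proof asserts when it relabels the slopes in increasing order but does not spell out.
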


\begin{proof}
Each $\gamma_j = a_j/b_j$ corresponds to a distinct slope in $\QQ\cup \{1/0\}$. We may assume the $b_j$ are nonnegative integers. By our orientation convention, each curve $\widehat{\gamma_j}$ will be drilled from $\Sigma_{1,1}\times\{\arctan(a_j/b_j)\} \subset \Sigma_{1,1}\times S^1$. Because $\Gamma$ is a collection of Farey neighbours, there is some minimal slope in $\QQ$, which we may relabel to be $\gamma_1=a_1/b_1$, and then up to relabeling, the slopes satisfy $a_1/b_1 < a_2/b_2 < \dots < a_n/b_n$. Then when we drill, the curves are drilled in cyclic order $\gamma_1$, $\gamma_2$, up to $\gamma_n$ in the $S^1$ factor of $\Sigma_{1,1}\times S^1$. The drilling is therefore homeomorphic to the drilling of Theorem~\ref{Thm:FareyPath}. Then the fact that $M_{\widehat{{\Gamma}}}$ is obtained by gluing $n$ regular ideal octahedra follows from Theorem~\ref{Thm:FareyPath}, and the fact that it is arithmetic follows from Lemma~\ref{Lem:OctahedraArithmetic}. An identical argument holds for $\overline{\Gamma}$. 

As for the union of $\widehat{\Gamma}$ and $\widehat{\overline{\Gamma}}$, the arithmeticity follows from the fact it is a double cover of  $\PT(\Sigma_{1,1})\setminus{\widehat{{\Gamma}}}$. Furthermore, the first $n$ canonical lifts will be at heights $\arctan(a_1/b_1) < \dots < \arctan(a_n/b_n)$, and the next $n$ at $\arctan(a_1/b_1)+\pi$ through $\arctan(a_n/b_n)+\pi$. Thus again we drill the Farey neighbours in an order homeomorphic to that of Theorem~\ref{Thm:FareyPath}, and so that theorem implies that the complement is built of $2n$ regular ideal octahedra. 
\end{proof}

\section{Projecting and lifting on the modular surface}

\begin{lemma}\label{Lem:ProjectAndLift}
Let $\gamma$ be an oriented geodesic on the modular surface $\SigmaMod$, obtained by projecting the simple closed curve $p/q \subset \Sigma_{1,1}$ via the covering map of Lemma~\ref{Lem:6FoldCover}. Then under the covering map, $\gamma$ has six lifts in $\Sigma_{1,1}$. 
These are $p/q$, $q/(q-p)$, $(p-q)/p$, and each of these three curves oriented in the opposite direction: $\overline{p/q}$, $\overline{q/(q-p)}$, and $\overline{(p-q)/p}$. 
\end{lemma}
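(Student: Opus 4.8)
The plan is to compute the covering group action from Lemma~\ref{Lem:6FoldCover} explicitly on the level of slopes, and then track the orbit of the curve $p/q$ under that group of order six. Recall from Lemma~\ref{Lem:6FoldCover} that the covering transformations for $\Sigma_{1,1}\to\SigmaMod$ are generated by a rotation of order three (coming from the glide rotation $V$ restricted to the base) and a rotation of order two (coming from $U$). On the abelian cover $\RR^2\setminus\Lambda$, these act linearly, and since the triangulation used in Lemma~\ref{Lem:6FoldCover} has edges $\lambda$ of slope $1/0$, $\mu$ of slope $0/1$, and a diagonal of slope $1/1$, the order-three rotation must permute the three slopes $\{1/0,\,0/1,\,1/1\}$ cyclically (these are the three edges of each equilateral triangle), and the order-two rotation fixes one edge and swaps the other two. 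So first I would pin down the matrices in $\PGL(2,\ZZ)$ (or $\PSL$) realizing these: the order-three element is $W=\pm\begin{pmatrix} 0 & -1 \\ 1 & -1\end{pmatrix}$ (or its inverse), which sends the slope $p/q$, thought of as the point $[p:q]$ acted on by the matrix on column vectors $(p,q)^{T}$, to the successive slopes in the list; and the order-two element is, say, $\pm\begin{pmatrix} 1 & -1 \\ 0 & -1\end{pmatrix}$ or similar, swapping two of the three.

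Second, I would simply apply these matrices to the column vector $(p,q)^{T}$ and read off the resulting slopes. Acting by $W=\begin{pmatrix} 0 & -1 \\ 1 & -1\end{pmatrix}$ sends $(p,q)\mapsto(-q,\,p-q)$, i.e.\ the slope $-q/(p-q)=q/(q-p)$; applying $W$ again sends $(p,q)\mapsto W^2(p,q)=(q-p,\,-p)$, i.e.\ the slope $(q-p)/(-p)=(p-q)/p$. That already produces the three unoriented slopes $p/q$, $q/(q-p)$, $(p-q)/p$ claimed in the statement. Third, I would account for the order-two generator: since one of the two generators of the deck group is (the base projection of) the glide rotation $U$, which by the observation in the proof of Lemma~\ref{Lem:6FoldCover} reverses orientation of canonical lifts, the order-two element acts on an \emph{oriented} curve by reversing its orientation while possibly also permuting slopes — but composing with the appropriate power of $W$ shows the full orbit of the oriented curve $p/q$ is exactly the three slopes each with both orientations, giving the six lifts $p/q,\ q/(q-p),\ (p-q)/p$ and their reverses $\overline{p/q},\ \overline{q/(q-p)},\ \overline{(p-q)/p}$. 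Since the cover has degree six and these six oriented curves are distinct (distinct slopes, or same slope with opposite orientation, which give different canonical lifts in $\UT(\Sigma_{1,1})$ as discussed before Theorem~\ref{Thm:CanonicalLiftFareyNeighbours}), they must be all the lifts.

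The main obstacle I expect is bookkeeping the orientations correctly: one must be careful that the order-two deck transformation genuinely reverses the orientation of the lifted geodesic (so that the orbit splits into ``three slopes $\times$ two orientations'' rather than six distinct slopes), and that the three slopes $p/q$, $q/(q-p)$, $(p-q)/p$ are genuinely distinct as isotopy classes on $\Sigma_{1,1}$ — this fails only in degenerate small cases, and one should note that for the curves arising here (Farey neighbours far out in the Farey graph) they are honestly distinct, so the six lifts are distinct and exhaust the fiber. A secondary check is normalization: the formula should be verified on the base case, e.g.\ $p/q=1/0$ should yield the orbit $\{1/0,\ 0/{-1}=0/1,\ ( -1)/( -1)=1/1\}$ with both orientations, matching the three edges of the triangulation, which confirms the matrices were chosen consistently with the conventions of Lemma~\ref{Lem:6FoldCover}.
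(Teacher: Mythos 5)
Your proposal is correct, but it runs the argument through a different engine than the paper does. The paper's proof of this lemma is purely geometric: it counts intersections of the curve $p/q$ with the three edges $\mu$, $\lambda$ and the diagonal of the equilateral triangulation ($|p|$, $q$ and $|q-p|$ times respectively), observes that the order-three rotation cyclically permutes those three edges, and then reads off the new slopes, with an explicit case analysis on the signs of $p$ and $q$ ($q>p>0$, $p>q>0$, $p<0$, degenerate cases). Your route instead identifies the induced action of the deck group on slopes as an explicit element of $\PGL(2,\ZZ)$ and computes $W(p,q)^{T}=(-q,p-q)^{T}$ and $W^2(p,q)^{T}=(q-p,-p)^{T}$ directly. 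Amusingly, the paper records exactly this computation as a separate follow-up (Lemma~\ref{Lem:RotationV}, ``in lieu of rotating the closed geodesics\dots one may instead directly rotate the slopes''), so you have essentially promoted the paper's afterthought to the main proof. What your version buys is the elimination of the sign case analysis; what it requires in exchange is a justification that the order-three rotation really induces $\pm\left(\begin{smallmatrix}0&-1\\1&-1\end{smallmatrix}\right)$ on $H_1(\Sigma_{1,1};\ZZ)$ --- your argument that it must cyclically permute the three edge slopes $\{1/0,0/1,1/1\}$, which pins it down up to inverse (and either choice yields the same unordered orbit), is adequate, and your base-case check at $1/0$ confirms the normalization. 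Two small clean-ups: the order-two rotation has linear part $-I$, so it fixes \emph{all} slopes and only reverses orientation --- your phrase ``fixes one edge and swaps the other two'' describes its action on the edges as arcs, not on slopes, and could mislead; and the three slopes $p/q$, $q/(q-p)$, $(p-q)/p$ are in fact \emph{always} distinct (the coincidence $p/q=q/(q-p)$ forces $p^2-pq+q^2=0$, which has no nonzero integer solutions), so no restriction to curves ``far out in the Farey graph'' is needed for the six lifts to be distinct.
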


\begin{proof}
We consider the images of $p/q$ under the rotations of order two and three of Lemma~\ref{Lem:6FoldCover}. As in the proof of that lemma, we will view $\Sigma_{1,1}$ as a quotient of the tiling of $\RR^2$ by equilateral triangles with vertices removed. 

Recall that the rotation of order three rotates an ideal triangle, permuting its vertices. 
Consider its effect on the curve $p/q$. We may assume without loss of generality that $q\geq 0$. If $p\geq 0$, then the curve $p/q$ meets the side $\mu$ of an equilateral triangle in the fundamental domain for $\Sigma_{1,1}$ a total of {$p$ times}. It meets $\lambda$ a total of {$q$ times}, and meets the diagonal $|q-p|$ times. See Figure~\ref{Fig:RotateV}, which shows the case {$q>p>0$}. 

\begin{figure}[ht!]
\centering
\begin{overpic}[width=11cm]{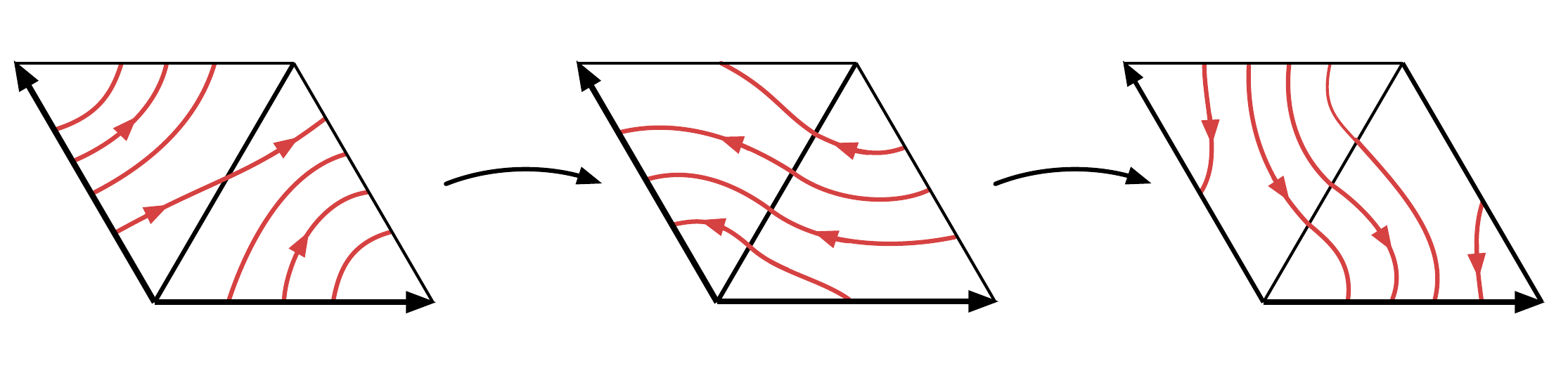}
    \put(4,8){{$\lambda$}}
    \put(17,1){{$\mu$}}
\end{overpic}
\caption{A rotation by $2\pi/3$ about the centre of each equilateral triangle takes the curve $p/q$ to the curve {$\overline{(p-q)/p}$}, and a further rotation takes it to {$\overline{q/(q-p)}$}. Shown is the case {$q>p>0$}. Similar pictures give other cases.}
\label{Fig:RotateV}
\end{figure}

Rotating by $2\pi/3$ takes the curve to one meeting $\mu$ a total of {$|q-p|$} times, meeting $\lambda$ a total of {$|p|$} times, and meeting the diagonal {$q$} times. In case $q>p> 0$, as shown in Figure \ref{Fig:RotateV}, the resulting slope is negative, of value {$\overline{(p-q)/p}$,} and a further rotation gives the curve of slope {$\overline{q/(q-p)}$.}
{Our convention is to take an overline if the curve crosses lambda from right to left; we will see that in all cases we obtain each curve in both direction so this convention will not matter.}

{If $p>q> 0$, the result of the rotation is positive, of slope $\overline{(p-q)/p}$},
and a further rotation results in a curve of slope {${q/(q-p)}$.} 

If $p<0$ then the curve $p/q$ meets $\mu$ a total of $|p|$  times, meets $\lambda$ a total of $q$ times, and meets the diagonal $|p|+q = q-p$ times. 
The resulting slopes after rotating are {${(q-p)/p}$} and {${\overline{(p-q)/p}}$}. 

Finally if one of $p$ or $q$ is zero, or $p=q=1$, the three slopes up to rotation are $0/1$, $1/0$, and $1/1$, and the lemma holds for these. 

Now consider the rotation of order two, with fixed point on an edge of the triangle. This takes the $p/q$ curve back to itself, but it gives it the opposite orientation. This will give us the curve $\overline{p/q}$. Similarly it gives the other two curves with opposite orientations. {Thus in all cases we obtain the set of both orientations of each of the slopes $\{p/q, {q/(q-p)},{(p-q)/p}\}$, as required.} 
\end{proof}

The following lemma shows that in lieu of rotating the closed geodesics and then considering the resulting slopes as above, one may instead directly rotate the slopes along the circle at infinity.

\begin{lemma}\label{Lem:RotationV}
For any $p/q\in \QQ\cup\{1/0\}$, $V(p/q) = q/(q-p)$ and $V^2(p/q)=(p-q)/p$. 
\end{lemma}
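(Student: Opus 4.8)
The plan is to reduce the claim to a single matrix computation, using the identification of $\QQ\cup\{1/0\}$ with the rational points of $\partial\HH^2$ and the corresponding action of $\PSL(2,\ZZ)$. First I would recall the convention fixed in Section~\ref{Sec:Surfaces}: the slope $p/q$ is identified with the column vector $(p,q)^{T}$ up to nonzero scalar, consistently with the statement there that $\begin{pmatrix} a & c \\ b & d\end{pmatrix}$ carries the edge from $1/0$ to $0/1$ onto the edge from $a/b$ to $c/d$ (so the first basis vector $(1,0)^{T}$ goes to $(a,b)^{T}$, i.e.\ to slope $a/b$, and the second to slope $c/d$). Under this convention a matrix $M=\pm\begin{pmatrix} a & b \\ c & d\end{pmatrix}\in\PSL(2,\ZZ)$ acts on slopes by $M(p/q)=(ap+bq)/(cp+dq)$, with the usual conventions $x/0=1/0$; the sign ambiguity is irrelevant since we work projectively.

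Next I would substitute $V=\pm\begin{pmatrix} 0 & -1 \\ 1 & -1\end{pmatrix}$ from the description of the generators. Applying $V$ to $(p,q)^{T}$ gives $(-q,\,p-q)^{T}$, so $V(p/q)=-q/(p-q)=q/(q-p)$, which is the first formula. For the second I would either apply $V$ once more — applying $V$ to $(-q,\,p-q)^{T}$ gives $\bigl(-(p-q),\,-q-(p-q)\bigr)^{T}=(q-p,\,-p)^{T}$, whence $V^{2}(p/q)=(q-p)/(-p)=(p-q)/p$ — or equivalently compute $V^{2}=\pm\begin{pmatrix} -1 & 1 \\ -1 & 0\end{pmatrix}$ and read off the action directly.

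Finally I would note that the degenerate inputs need no separate treatment: for $p/q=1/0$ (so $q=0$) the formulas give $V(1/0)=0/1$ and $V^{2}(1/0)=1/1$, matching $V\cdot(1,0)^{T}=(0,1)^{T}$ and $V^{2}\cdot(1,0)^{T}=(-1,-1)^{T}$, and the values $0/1$ and $1/1$ are likewise handled by the same substitution. As a consistency check one observes that $V^{3}$ acts trivially on slopes and that the three slopes $\{p/q,\ q/(q-p),\ (p-q)/p\}$ produced are exactly the unoriented slopes appearing in Lemma~\ref{Lem:ProjectAndLift}. There is essentially no obstacle beyond bookkeeping here: the only subtlety is ensuring the slope-to-vector convention is precisely the one fixed earlier, so that the answer comes out as $q/(q-p)$ rather than its inverse or a sign variant; once that is pinned down, the lemma is immediate from the two matrix multiplications above.
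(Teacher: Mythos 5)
Your proposal is correct and matches the paper's proof, which is exactly the same direct matrix computation (the paper simply states the two products without spelling out the vector bookkeeping). Your extra care with the slope-to-vector convention and the degenerate case $1/0$ is sound but not needed beyond what the paper records.
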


\begin{proof}
Recall that $V$ is the matrix 
$V=\pm\begin{pmatrix} 0 & -1\\ 1 & -1 \end{pmatrix}$. 
Then $V(p/q) =q/(q-p)$, and $V^2(p/q)=(p-q)/p$. 
\end{proof}
{Observe that the first rotation shown} in Figure~\ref{Fig:RotateV} is $V^2$.

We will now turn a sequence of geodesics in the punctured torus into a sequence of geodesics on the modular surface. We start with an example, shown in Figure~\ref{Fig:Farey}. Consider the $3/2$ curve. There is a shortest path from the Farey triangle with vertices $(1/1, 0/1, 1/0)$ to a Farey triangle with vertex $3/2$. The path meets three Farey triangles, with vertices $(1/1, 0/1, 1/0)$, $(1/1, 2/1, 1/0)$, and $(1/1, 2/1, 3/2)$. Form a collection of curves $\Gamma$ by adding all the distinct slopes in all these triangles to $\Gamma$. 

Thus $\Gamma$ consists of $0/1$, $1/0$, {$1/1$}, $2/1$, and $3/2$. Note these are Farey neighbours, so Theorem~\ref{Thm:CanonicalLiftFareyNeighbours} implies that the complement of their canonical lifts (oriented both ways) is an arithmetic manifold. 

\begin{figure}[ht!]
    \centering
    \begin{overpic}[width=5cm]{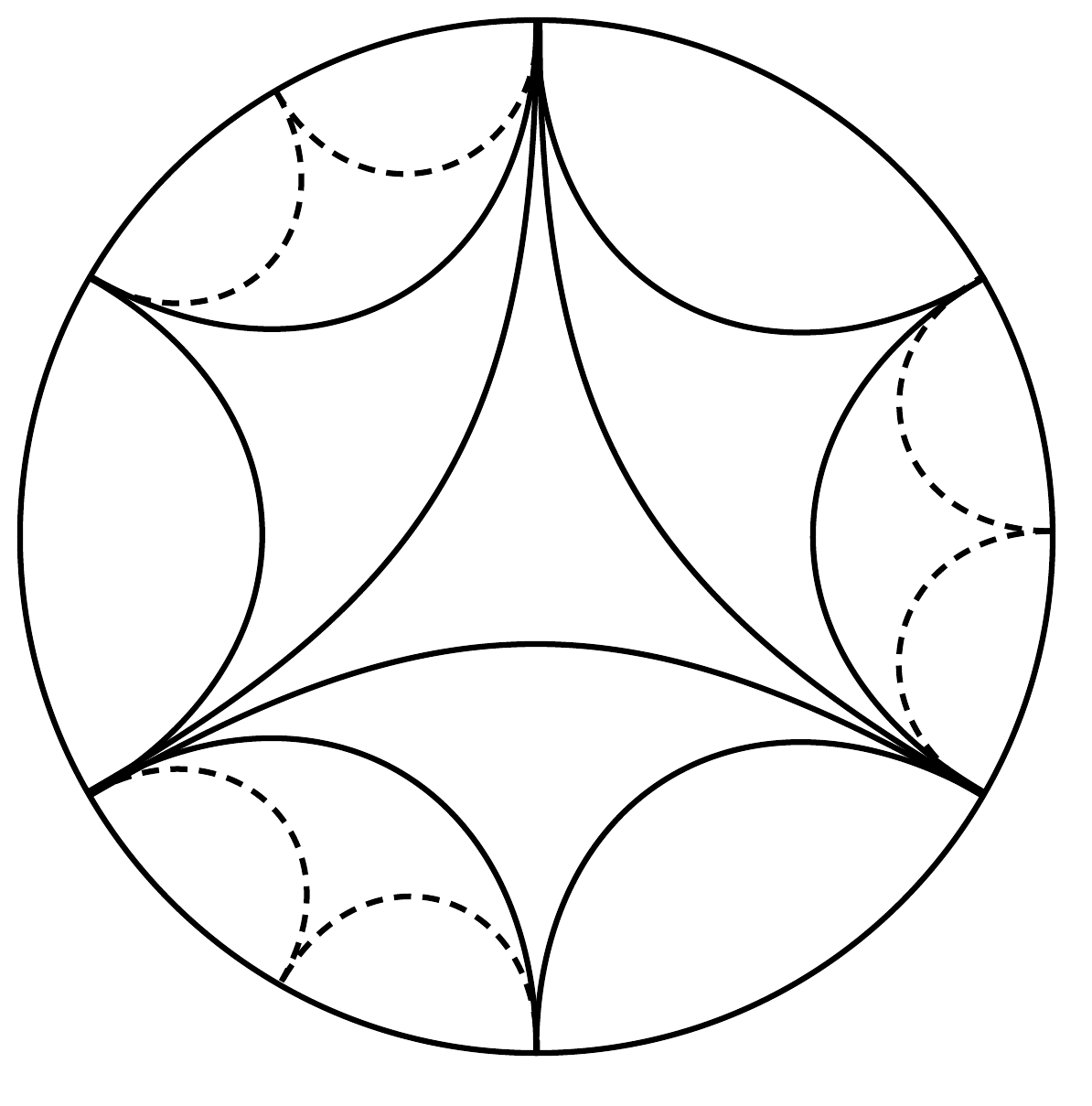}
    \put(92,23){$1$}
    \put(2,23){$0$}
    \put(46,101){$\infty$}
    \put(47,-3){$\frac{1}{2}$}
    \put(94,74){$2$}
    \put(-4,73){$-1$}
    \put(98,50){$3/2$}
    \end{overpic}
    \caption{The Farey graph of rational slopes.}
    \label{Fig:Farey}
\end{figure}

We wish to apply the covering projection from $\UT(\Sigma_{1,1})$ to $\UT(\SigmaMod)$. However, note that the canonical lift of $\Gamma$ does not cover any link complement in the unit tangent bundle of the modular surface, because $\Gamma$ does not contain all the 
preimages of its projections to the modular surface. Thus we extend $\Gamma$, by including all images of $\Gamma$ under the rotations $V$ and $V^2$. Thus in the example of Figure~\ref{Fig:Farey}, we would add $-1/1 = V(2/1)$, $1/2=V^2(2/1)$, $-2/1=V(3/2)$ and $1/3=V^2(3/2)$. The result is a again a collection of Farey neighbours, and now the complement of all canonical lifts is a cover of the complement of a modular link. 

We generalise this example. 

\begin{theorem}\label{Thm:ModularArithmetic1}
Any modular geodesic that lifts to a simple closed curve $\alpha$ on the once punctured torus is part of an arithmetic link in $\UT(\SigmaMod)$ with all components being modular geodesics. 
Moreover, suppose the shortest path in the Farey triangulation between the triangle $(0,1,\infty)$ and any triangle with vertex $\alpha$ passes through $x$ Farey triangles. Then the complement of the lift can be decomposed into $x$ regular ideal octahedra.
\end{theorem}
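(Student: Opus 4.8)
The plan is to enlarge the given curve $\alpha$ on $\Sigma_{1,1}$ to a finite collection of simple closed curves that is invariant under the order‑three deck transformation $V$ of the cover $\Sigma_{1,1}\to\SigmaMod$ and that consists of Farey neighbours, to apply Theorem~\ref{Thm:CanonicalLiftFareyNeighbours} upstairs in $\UT(\Sigma_{1,1})$, and then to descend through the $6$‑fold cover of Lemma~\ref{Lem:6FoldCover}. Concretely: let $T_1=(0,1,\infty),T_2,\dots,T_x$ be the shortest path of Farey triangles from $(0,1,\infty)$ to a triangle $T_x$ having $\alpha$ as a vertex, let $\Gamma_0$ be the set of all slopes occurring as vertices of $T_1,\dots,T_x$ (so $\alpha\in\Gamma_0$), and set $\Gamma=\Gamma_0\cup V(\Gamma_0)\cup V^2(\Gamma_0)$, where $V$ acts on slopes as in Lemma~\ref{Lem:RotationV}. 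The target link in $\UT(\SigmaMod)$ will be $L=\bigcup_{s\in\Gamma_0}\widehat{\pi(s)}$, where $\pi\from\Sigma_{1,1}\to\SigmaMod$ is the covering map; its components are closed geodesics on $\SigmaMod$ — hence modular geodesics — and include $g=\pi(\alpha)$.

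The first thing to prove is that $\Gamma$ is a collection of Farey neighbours of cardinality $3x$. I would argue that $P_0:=T_1\cup\dots\cup T_x$ is an embedded disk triangulated by $x$ Farey triangles — this uses that the dual graph of the Farey triangulation is a tree, so a geodesic path of triangles is embedded and consecutive triangles meet along a single edge — hence $\Gamma_0$ is the vertex set of the boundary polygon of $P_0$ and is a set of Farey neighbours. Then, since $V$ fixes $T_1$ setwise and cyclically permutes its three edges, and since $T_2,\dots,T_x$ all lie in the half‑plane (``cap'') cut off by the edge $T_1\cap T_2$ on the side away from $T_1$, the sets $P_0\setminus T_1$, $V(P_0\setminus T_1)$, $V^2(P_0\setminus T_1)$ lie in the three caps of $T_1$, which are pairwise disjoint. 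Therefore $P:=P_0\cup V(P_0)\cup V^2(P_0)$ is an embedded disk obtained by attaching three disjoint ``arms'' to $T_1$ along its three edges; it is triangulated by $1+3(x-1)=3x-2$ Farey triangles and has $3x$ vertices, which are Farey neighbours when read cyclically around $\partial P$. Applying Theorem~\ref{Thm:CanonicalLiftFareyNeighbours}(2) with $n=3x$ then shows that $M:=\UT(\Sigma_{1,1})\setminus(\widehat{\Gamma}\cup\widehat{\overline{\Gamma}})$ is arithmetic and decomposes into $6x$ regular ideal octahedra.

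Next I would identify $M$ as a $6$‑fold cover of $\UT(\SigmaMod)\setminus L$. By Lemma~\ref{Lem:ProjectAndLift} the full preimage of $\widehat{\pi(s)}$ under $\UT(\Sigma_{1,1})\to\UT(\SigmaMod)$ consists of the canonical lifts of $s$, $V(s)$, $V^2(s)$ in both orientations; since $\Gamma=\langle V\rangle\cdot\Gamma_0$ and the order‑two deck transformation $U$ merely reverses orientations, the full preimage of $L$ is exactly $\widehat{\Gamma}\cup\widehat{\overline{\Gamma}}$. Hence $M\to\UT(\SigmaMod)\setminus L$ is a $6$‑fold cover, and $\UT(\SigmaMod)\setminus L$ is arithmetic because arithmeticity passes to finite quotients (the remark after Lemma~\ref{Lem:OctahedraArithmetic}). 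For the octahedron count, I would observe that the deck group $\langle U,V\rangle\cong\ZZ/6$ acts on $M\subset\Sigma_{1,1}\times S^1$ by isometries of the form $\phi\times r_\theta$, where $r_\theta$ is a rotation of the $S^1$‑fibre that is nontrivial for every nontrivial group element; such a map sends each cutting surface $\Sigma_{1,1}\times\{h\}$ from the proof of Theorem~\ref{Thm:FareyPath} to another one, hence permutes the $6x$ octahedral blocks, and does so freely since a nontrivial rotation of $S^1$ moves every height. So the decomposition descends to a decomposition of $\UT(\SigmaMod)\setminus L$ into $6x/6=x$ regular ideal octahedra.

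The step I expect to be the main obstacle is verifying that $P=P_0\cup V(P_0)\cup V^2(P_0)$ is genuinely an embedded triangulated disk — that is, pinning down the combinatorics of how the path $T_2,\dots,T_x$ sits inside a single cap of $T_1$ together with the disjointness of the three caps of $T_1$ — and, relatedly, checking that the $\langle U,V\rangle$‑action on $\UT(\Sigma_{1,1})$ respects the product structure $\Sigma_{1,1}\times S^1$ and the height function finely enough that it permutes (freely) the octahedral blocks of Theorem~\ref{Thm:FareyPath}. Once these two combinatorial/equivariance points are in place, the remainder is bookkeeping with the counts and invocations of Theorem~\ref{Thm:CanonicalLiftFareyNeighbours} and Lemma~\ref{Lem:OctahedraArithmetic}.
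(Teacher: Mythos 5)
Your proposal follows essentially the same route as the paper's proof: enlarge $\alpha$ to the $V$-invariant collection of $3x$ slopes coming from the triangles of the shortest Farey path, apply Theorem~\ref{Thm:CanonicalLiftFareyNeighbours} to get $6x$ regular ideal octahedra upstairs, and descend through the $6$-fold cover of Lemma~\ref{Lem:6FoldCover} using Lemma~\ref{Lem:ProjectAndLift}, with the deck group permuting the octahedra freely. The two points you flag as obstacles (that the union of the three rotated paths is an embedded triangulated disk whose boundary vertices give a cyclically ordered set of Farey neighbours, and that the glide rotations permute the octahedral blocks without fixing any) are exactly the steps the paper asserts with only brief justification, and your arguments for them are correct.
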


\begin{proof}
Given any slope $p/q$, there is a shortest path in the Farey triangulation from the centre of the triangle with vertices $(0/1, 1/1, 1/0)$ to a triangle with a vertex $p/q$. This will pass through some number of Farey triangles. Build a collection of curves $\Gamma$ by adding all the slopes corresponding to all the vertices of the Farey triangles in the path. Thus $\Gamma$ will contain $0/1, 1/1, 1/0$ and $p/q$, as well as additional curves at vertices of Farey triangles. 
At this step, $\Gamma$ will contain a total of $2+x$ slopes: three corresponding to the first triangle $(0/1, 1/1, 1/0)$, and $x-1$ additional slopes, one for each new triangle in the path. 

Next, expand $\Gamma$ by adding all images of $\Gamma$ under the rotations $V$ and $V^2$ of Lemma~\ref{Lem:RotationV}. Note this adds $2(x-1)$ additional slopes to $\Gamma$, so that in total, $\Gamma$ now contains $3x$ slopes. 

Observe that the collection $\Gamma$ can now be ordered in $\QQ \cup \{1/0\}$ to give a set of Farey neighbours, invariant under the action of $V$. 
Theorem~\ref{Thm:CanonicalLiftFareyNeighbours} then implies that $\UT(\Sigma_{1,1})\setminus(\widehat{\Gamma}\cup\widehat{\overline{\Gamma}})$ is arithmetic, obtained by gluing $6x$ regular ideal octahedra. By Lemma~\ref{Lem:ProjectAndLift}, the drilled curves are exactly the canonical lifts of all curves projecting to a collection of $x$ simple closed curves on the modular surface. 

Now consider the action of the covering tranformations of Lemma~\ref{Lem:6FoldCover} from $\UT(\Sigma_{1,1})$ to $\UT(\SigmaMod)$. By construction, the order two transformation will take the canonical lift of $p/q$ to that of $\overline{p/q}$. The order three transformation will take the canonical lift of $p/q$ to $V(p/q)$ and $V^2(p/q)$. Thus  $\UT(\Sigma_{1,1})\setminus(\widehat{\Gamma}\cup\widehat{\overline{\Gamma}})$ is a six-fold cover of the complement of a collection of canonical lifts in $\UT(\SigmaMod)$. 

Finally, observe that each of the covering transformations maps a regular ideal octahedron to a distinct regular ideal octahedron. By the construction of Theorem~\ref{Thm:FareyPath}, the regular ideal octahedra lie between canonical lifts that share an edge in the Farey triangulation. The covering transformation of degree three takes the octahedron between $a/b$ and $c/d$ to that between $V(a/b)$ and $V(c/d)$, and then again to that between $V^2(a/b)$ and $V^2(c/d)$; these are all distinct edges of the Farey triangulation. The covering transformation of degree two takes the octahedron between $a/b$ and $c/d$ to that between $\overline{a/b}$ and $\overline{c/d}$; this octahedron differs from the original by a rotation by $\pi$ in the $S^1$ factor of $\UT(\Sigma_{1,1}) \cong \Sigma_{1,1}\times S^1$. 

Then when we take the quotient by covering transformations, we obtain an arithmetic canonical link complement in $\UT(\SigmaMod)$, with the link containing the original curve, and built from $6x/6 = x$ regular ideal octahedra. 
\end{proof}

Theorem~\ref{Thm:MainArithmetic} from the introduction is an immediate consequence. 

\begin{corollary}\label{Cor:InfinitelyMany}
There are infinitely many arithmetic modular links. \qed
\end{corollary}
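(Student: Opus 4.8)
The plan is to derive Corollary~\ref{Cor:InfinitelyMany} as a counting argument on top of Theorem~\ref{Thm:ModularArithmetic1}. First I would observe that Theorem~\ref{Thm:ModularArithmetic1} applies to \emph{every} slope $p/q\in\QQ\cup\{1/0\}$: given any such slope, it produces an arithmetic link in $\UT(\SigmaMod)$, one of whose components is the modular geodesic obtained by projecting the simple closed curve $p/q$ on $\Sigma_{1,1}$ via the covering map of Lemma~\ref{Lem:6FoldCover}. So it suffices to exhibit infinitely many slopes whose associated arithmetic links are pairwise non-isotopic (equivalently, whose complements are pairwise non-homeomorphic), and the cleanest invariant to separate them is volume.

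The key steps, in order: (1) Recall from the last sentence of Theorem~\ref{Thm:ModularArithmetic1} that the arithmetic link complement associated to a slope $\alpha=p/q$ is glued from exactly $x$ regular ideal octahedra, where $x$ is the number of Farey triangles met by the shortest Farey-path from the triangle $(0,1,\infty)$ to a triangle with vertex $\alpha$; hence its volume is $x\cdot v_{oct}$. (2) Note that $x$ is unbounded as $\alpha$ ranges over $\QQ\cup\{1/0\}$: for instance the slope $n/1$ (equivalently $1/n$) lies at Farey distance growing with $n$ from the base triangle — concretely, the continued-fraction/Stern--Brocot depth of $p/q$ tends to infinity along suitable sequences, so one can pick slopes $\alpha_1,\alpha_2,\dots$ with strictly increasing values $x_1<x_2<\cdots$. (3) Since volume is a homeomorphism invariant of a finite-volume hyperbolic $3$-manifold (Mostow--Prasad rigidity), the complements of the resulting links have strictly increasing volumes $x_k\, v_{oct}$ and are therefore pairwise non-homeomorphic; in particular the links are pairwise distinct. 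Each is arithmetic by Theorem~\ref{Thm:ModularArithmetic1} (or directly by Lemma~\ref{Lem:OctahedraArithmetic}), so there are infinitely many arithmetic modular links.

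The only mild subtlety — not really an obstacle — is step (2): making sure one genuinely gets infinitely many \emph{distinct} values of $x$, rather than the same small link reappearing. This is handled by choosing an explicit family, e.g.\ $\alpha_k = (k+1)/k$ or the Fibonacci slopes, whose Farey-path length to $(0,1,\infty)$ is a strictly increasing function of $k$; one can read this off the fact that consecutive Farey neighbours of $1/0$ and $0/1$ build the Stern--Brocot tree, and depth in that tree is unbounded. Alternatively, one may simply invoke that $\SigmaMod$ carries modular geodesics of arbitrarily large length lifting to simple closed curves on $\Sigma_{1,1}$ (the $n/1$ curves work), together with the volume formula, to conclude.

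\begin{proof}
By Theorem~\ref{Thm:ModularArithmetic1}, for every slope $\alpha = p/q \in \QQ\cup\{1/0\}$ the modular geodesic obtained by projecting the simple closed curve $p/q \subset \Sigma_{1,1}$ is a component of an arithmetic link $L_\alpha$ in $\UT(\SigmaMod)$, whose complement is glued from $x_\alpha$ regular ideal octahedra, where $x_\alpha$ is the number of Farey triangles on the shortest Farey-path from the triangle $(0,1,\infty)$ to a triangle with vertex $\alpha$. In particular $\Vol(\UT(\SigmaMod)\setminus L_\alpha) = x_\alpha\, v_{oct}$.

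Now consider the slopes $\alpha_k = 1/k$ for $k \in \NN$. The Farey distance from the base triangle to a triangle with vertex $1/k$ is strictly increasing in $k$, so the quantities $x_{\alpha_k}$ are unbounded, and after passing to a subsequence we may assume $x_{\alpha_1} < x_{\alpha_2} < \cdots$. By Mostow--Prasad rigidity, volume is a topological invariant of a finite-volume hyperbolic $3$-manifold, so the complements $\UT(\SigmaMod)\setminus L_{\alpha_k}$ have strictly increasing volumes and are therefore pairwise non-homeomorphic. Hence the links $L_{\alpha_k}$ are pairwise distinct, and each is arithmetic. This exhibits infinitely many arithmetic modular links.
\end{proof}
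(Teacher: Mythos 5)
Your argument is correct and is essentially the fleshed-out version of what the paper leaves implicit: the corollary is stated as an immediate consequence of Theorem~\ref{Thm:ModularArithmetic1}, obtained by applying that theorem to infinitely many slopes. Your addition of the volume count $x_\alpha\,v_{oct}$ together with Mostow--Prasad rigidity to certify that the resulting links are pairwise distinct is a valid (and welcome) way to make the ``infinitely many'' claim rigorous; one could alternatively just note that the links have $x_\alpha$ geodesic components, which already distinguishes them.
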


\section{Cutting sequences}

As explained in Section~\ref{Sec:Surfaces}, canonical lifts of geodesics in $\UT(\SigmaMod)$ can be viewed as links in $S^3\setminus K$ where $K$ is the trefoil knot. In the previous section, we found infinitely many arithmetic canonical link complements. We wish to identify these links as the complement of links in the 3-sphere. To do so, we will find cutting sequences for the links, enabling us to identify them in the branched surface of Figure~\ref{Fig:BranchedSurface} following \cite{Ghys}. That is the main goal of this section.

\begin{definition}
Let $\alpha$ be a closed geodesic in the modular surface $\SigmaMod$. The \emph{$LR$-cutting sequence} of $\alpha$ is the bi-infinite sequence of instances of $L$ and $R$ obtained as follows. Recall that a fundamental domain for $\SigmaMod$ is the quotient of an ideal triangle by an order three and an order two rotation. As in the proof of Lemma~\ref{Lem:6FoldCover}, we take a cover of $\SigmaMod$ that tiles $\RR^2$ by equilateral triangles, and remove the lattice $\Lambda$ consisting of the vertices of these triangles. Lift $\alpha$ to this cover. Consider a point of intersection of $\alpha$ with an edge of a triangle. Then in the adjacent triangle, $\alpha$ either runs next to the edge to the left or to the right. If it runs to the left, take the letter $L$. If it runs right, take the letter $R$. Now repeat for the next triangle, and so on. Because $\alpha$ is a closed geodesic, eventually $\alpha$ returns to an edge identified with the original edge of intersection, and the sequence will repeat. 
\end{definition}

\begin{remark}\label{rem:positive slopes}
Since different lifts of the geodesic $\alpha$ differ by an element of $\PSL(2,\ZZ)$ which preserves the Farey tessellation by ideal triangles, the cutting sequence remains the same up to cyclic order no matter which lift of $\alpha$ we start with. By reversing the orientation of $\alpha$ if necessary we may always assume its cutting sequence begins with an $L$. Furthermore, we may always assume it enters the $0,1,\infty$ triangle through the imaginary axis (oriented to the right) by using the rotation about $i$ given by $U$ above.
\end{remark}

We can similarly define a cutting sequence for simple closed curves in $\Sigma_{1,1}$. Take a curve $p/q$ with $p/q$ positive, and lift to the abelian cover of $\Sigma_{1,1}$ that we build by tiling $\RR^2$ with equilateral triangles, again as in the proof of Lemma~\ref{Lem:6FoldCover}. Lift $p/q$ to this cover. The lift will intersect lifts of the arcs $\mu$ and $\lambda$. If it intersects $\mu$, assign an instance of $A$. If it intersects $\lambda$, assign an instance of $B$. This gives an $AB$-cutting sequence for geodesics on $\Sigma_{1,1}$. 

The following algorithm, from Series~\cite{Series:GeometryMarkoff} and Davis~\cite[Algorithm~7.6]{Davis:Dynamics}, gives the $AB$-cutting sequence in terms of the continued fraction expansion of $p/q$.  

\begin{algorithm}\label{Alg:CuttingSequence}
\begin{enumerate}
\item Start with an infinite string consisting of incidences of the letter $A$. This corresponds to a lift of a geodesic of slope $0$.
{If $p/q\neq 0$, take a continued fraction expansion of the slope $p/q$ of the form $[a_1, a_2, \dots, a_k]$ where all the $a_j$ are positive.}
\item Insert $a_k$ instances of the letter $B$ between each pair of letters $A$. The corresponding trajectory now has slope $a_k$. {If $p/q=a_k$ we are done.}
\item Else swap every $A$ to $B$ and vice-versa. The corresponding trajectory now has slope $1/a_k$.  {If $p/q=1/a_k$ we are done.}
\item Else insert $a_{k-1}$ instances of the letter $B$ between each pair of letters $A$. The corresponding trajectory now has slope $a_{k-1}+\frac{1}{a_k}$. {If we have reached $p/q$ we are done.}
\item Else reverse $B$ and $A$. The corresponding trajectory now has slope $\frac{1}{a_{k-1}+\frac{1}{a_k}}$. {If we have reached $p/q$ we are done.}
\item Else continue this process, ending by inserting $a_1$ instances of the letter $B$ between each pair of letters $A$. This yields the $AB$-cutting sequence corresponding to the fractional slope $[a_1, a_2, \dots, a_k]$.
\end{enumerate}
\end{algorithm}

We wish to find the $LR$-cutting sequence corresponding to a modular geodesic, and the $AB$-cutting sequence of Algorithm~\ref{Alg:CuttingSequence} for its lift to the once punctured torus. By Remark \ref{rem:positive slopes} the lift we choose does not change the $LR$-cutting sequence and thus we may choose the lift to be a curve of slope $p/q$ on $\Sigma_{1,1}$ where $p$ and $q$ are non-negative. We can obtain the $LR$-cutting sequence corresponding to its projection as follows.

\begin{algorithm}\label{Alg:CuttingModular}
Let $p/q$ be a slope, where $p$ and $q$ are both positive. Then for $j=1, \dots, n-1$:
\begin{enumerate}
\item If the $j$-th letter is $A$ and the next letter is $B$, add $L$.
\item If the $j$-th letter is $B$ followed by $A$, add $R$.
\item If the $j$-th letter is $A$ followed by $A$, add $RL$.
\item If the $j$-th letter is $B$ followed by $B$, add $LR$.
\end{enumerate}
If the slope is $0/1$ or $1/0$ (these are both lifts of the same modular geodesic) the cutting sequence is $LR$.
\end{algorithm}

See Figure~\ref{Fig:CuttingSequence}.
\begin{figure}
\centering
\begin{overpic}[width=4.5cm]{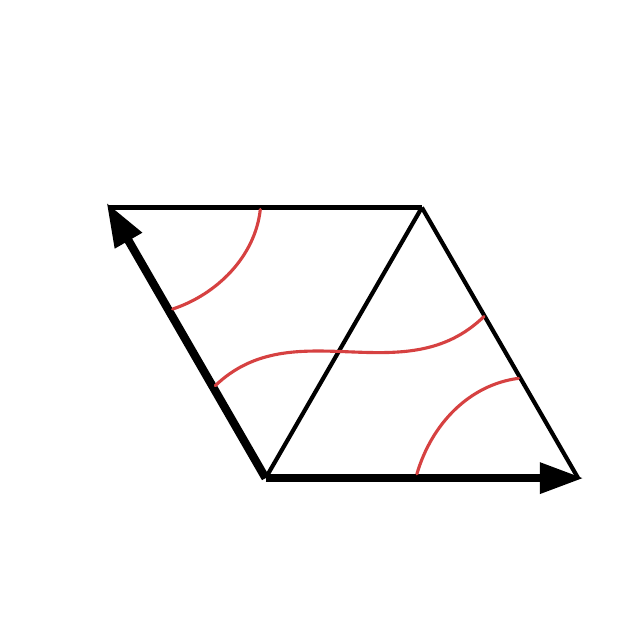}
    \put(20,40){$A$}
    \put(83,45){$A$}
    \put(65,14){$B$}
    \put(40,70){$B$}
    \put(73,30){$R$}
    \put(30,56){$L$}
    \put(40,35){$R$}
    \put(64,46){$L$}
\end{overpic}
\begin{overpic}[width=4.5cm]{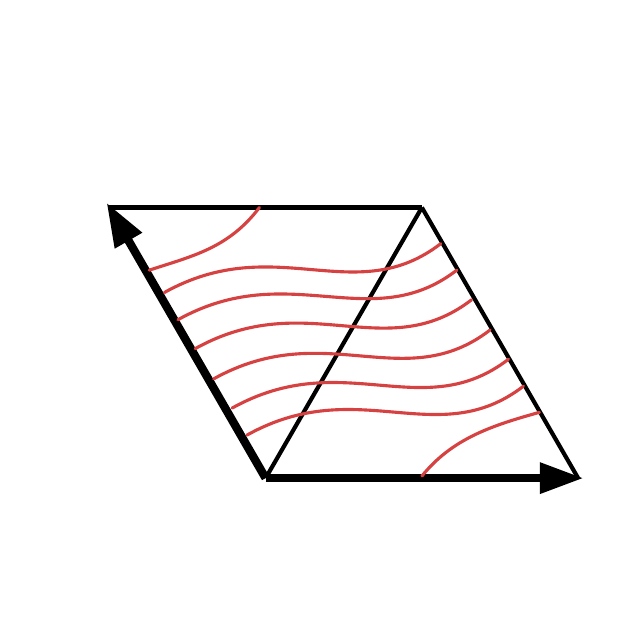}
    \put(20,40){$A$}
    \put(83,45){$A$}
    \put(65,14){$B$}
    \put(40,70){$B$}
\end{overpic}
\caption{On the left is the general rule for determining the cutting sequence of a positive slope, on the right is the cutting sequence $LR(RL)^6$ corresponding to the projection of the geodesic of slope $1/7$.}
\label{Fig:CuttingSequence}
\end{figure}

\begin{example}\label{Example:cuts}
Given a straight line of slope $1/n$, its $AB$-cutting sequence is the bi-infinite sequence given by concatenating copies of $BA^n$. Its $LR$-cutting sequence is the bi-infinite sequence given by concatenating $LR(RL)^{n-1}$. 
\end{example}

\subsection{Modular links}
Now return to the arithmetic modular links of Theorem~\ref{Thm:ModularArithmetic1}. We will construct examples of such links in the trefoil complement in the 3-sphere. 

From the proof of that theorem, the links are obtained by adding curves from the Farey triangulation that are invariant under the rotation $W$ rotating $0/1$ to $1/1$, $1/1$ to $1/0$, and $1/0$ to $0/1$. The smallest collection of curves comes from the initial triangle $0/1$, $1/1$, and $1/0$. All three curves at the vertices of this triangle are identified when we project to $\SigmaMod$. Hence we may use any of the three curves to determine the modular link. We take $p/q = 1/1$. 

Then observe that the $AB$-cutting sequence in this case is simply obtained by concatenating copies of $BA$. By Algorithm~\ref{Alg:CuttingModular}, the $LR$-cutting sequence is then obtained by concatenating copies of $LR$ (or equivalently $RL$). Therefore the modular geodesic corresponds to $RL$. In Figure~\ref{Fig:Parallelogram}, shown are the three distinct lifts of this geodesic in the parallelogram that is a fundamental domain for $\Sigma_{1,1}$. There are six lifts in total. As discussed above, the other three lifts traverse these curves in opposite directions. Note all six curves determine a cutting sequence $RL$ or $LR$, which gives the same bi-infinite sequence. 

\begin{figure}[ht!]
    \centering
    \begin{overpic}[width=4cm]{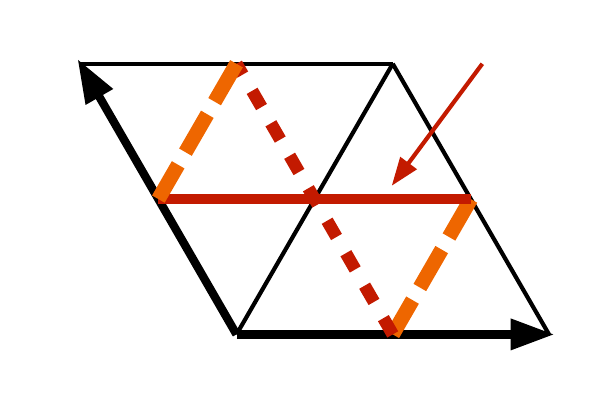}
    \put(95,8){$\lambda$}
    \put(6,55){$\mu$}
    \put(82,56){$RL$}
    \end{overpic}
    \caption{A fundamental domain for the two dimensional torus, and {three} different lifts corresponding to the modular geodesic $RL$.}
    \label{Fig:Parallelogram}
\end{figure}

Thus we have proved:
\begin{lemma}\label{Lem:Whitehead}
 The modular geodesic $RL$ is arithmetic.   \qed
\end{lemma}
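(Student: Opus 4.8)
The plan is to identify the manifold $\UT(\SigmaMod)\setminus\widehat{RL}$ explicitly as the Whitehead link complement. By the discussion preceding the statement, the modular geodesic $RL$ lifts to the once-punctured torus as the six curves that are the two orientations of the slopes $0/1$, $1/1$, $1/0$ (the vertices of the base Farey triangle), which are already invariant under the rotation $W$ and form a set of Farey neighbours. Applying Theorem~\ref{Thm:ModularArithmetic1} with $x=1$ — the base triangle $(0/1,1/1,1/0)$ itself contains a vertex at the relevant slope, so the Farey path has length one — shows immediately that the complement of the canonical lift is arithmetic and decomposes into a single regular ideal octahedron. This already proves the stated lemma as written.

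However, since the preceding text promises to realise these links inside the trefoil complement in $S^3$, I would also carry out the following identification. First I would record that a one-octahedron arithmetic link complement in $S^3$ is, by the classification of such small-volume manifolds, the Whitehead link complement (its volume is $v_{oct}$, and it is the unique octahedral link in $S^3$ of that volume up to the relevant constraints). Then I would confirm consistency with the topological picture: by Ghys's branched-surface description (Figure~\ref{Fig:BranchedSurface}), the modular link associated to the cyclic word $RL$ is a two-component link — one component tracing the pattern $RL$ on the template, the other being the trefoil core — and the complement of a two-component link in $S^3$ with octahedral volume $v_{oct}$ is the Whitehead link complement. I would cite \cite[\S~4.5]{MaclachlanReid} for the arithmeticity of the Whitehead link, closing the loop with the claim in the introduction that $\UT(\SigmaMod)\setminus\widehat{\gamma_0}$ is the Whitehead link complement.

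Concretely, the steps in order: (1) observe $RL$ has lift a simple closed curve of slope $1/1$ on $\Sigma_{1,1}$, so Theorem~\ref{Thm:ModularArithmetic1} applies directly; (2) compute that the Farey path from the base triangle to a triangle with vertex $1/1$ passes through $x=1$ triangle, hence the octahedron count is $1$; (3) invoke Lemma~\ref{Lem:OctahedraArithmetic} to conclude arithmeticity, which is the content of the lemma; (4) for the $S^3$ identification, match the single-octahedron two-component link complement against the Whitehead link complement via known volume/combinatorial data and cite \cite{MaclachlanReid}.

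The main obstacle is step (4): asserting that a one-octahedron complement containing a trefoil-component is \emph{the} Whitehead link complement rather than merely sharing its volume. The cleanest route is not a volume-uniqueness argument but an explicit homeomorphism — tracking through the Poincar\'e-polyhedron gluing of Theorem~\ref{Thm:FareyPath} in the $n=1$ case (one octahedron, faces glued in pairs with the two punctured-annulus faces identified) and recognising the resulting face-pairing as the standard ideal-octahedral decomposition of the Whitehead link complement. Since the preceding excerpt has already done the octahedral bookkeeping, this amounts to a direct comparison of gluing patterns, and the arithmeticity half of the lemma needs nothing beyond Theorem~\ref{Thm:ModularArithmetic1} and Lemma~\ref{Lem:OctahedraArithmetic}.
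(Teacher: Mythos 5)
Your proposal is correct and follows essentially the same route as the paper: the paper also deduces arithmeticity by identifying $RL$ as the projection of the slope $1/1$ (via the cutting-sequence computation of Algorithm~\ref{Alg:CuttingModular}) and applying Theorem~\ref{Thm:ModularArithmetic1} to the initial Farey triangle, i.e.\ the case $x=1$. Your optional step (4) matches the paper's remark that the lemma can alternatively be proved directly by exhibiting an explicit homeomorphism (by isotopy, as in Figure~\ref{Fig:Whitehead}) from the complement of the $RL$ orbit on the template to the Whitehead link complement, rather than by a volume-uniqueness argument.
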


The corresponding curve in the trefoil complement is obtained by drawing a closed curve on the branched surface of Figure~\ref{Fig:BranchedSurface}. The cutting sequence $LR$ instructs us that this curve must first run over the $L$ lobe of the branched surface, then the $R$ lobe, then close.
This is shown on the left of Figure~\ref{Fig:Whitehead}. Note that Lemma~\ref{Lem:Whitehead} is easily proved directly by the fact that its complement is homeomorphic to the Whitehead link complement as shown by the deformations of Figure~\ref{Fig:Whitehead}. 

\begin{figure}[h!]
    \centering
    \includegraphics[width=12cm]{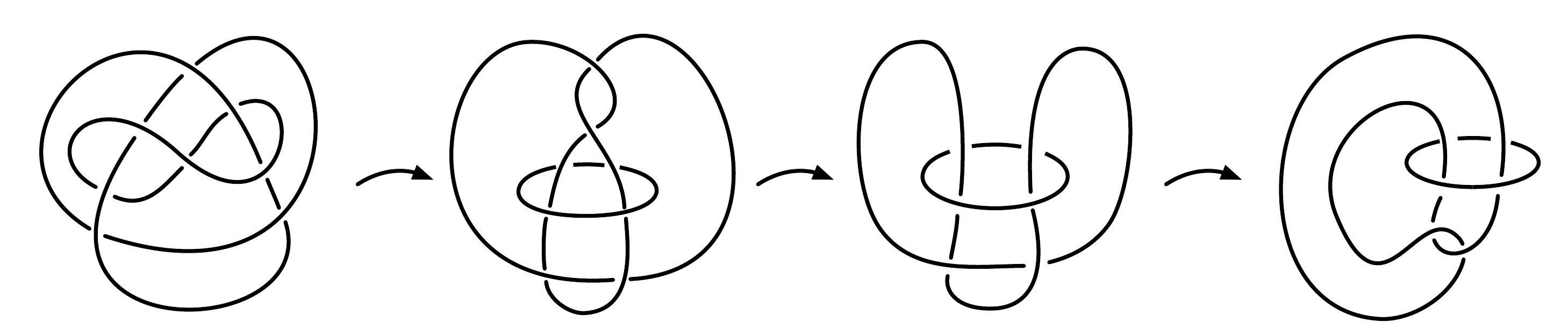}
    \caption{The homeomorphism between the complement of the $RL$ geodesic and the Whitehead link complement.}
    \label{Fig:Whitehead}
\end{figure}

Now consider the next simplest arithmetic modular link arising from the construction in the proof of Theorem~\ref{Thm:ModularArithmetic1}. This is obtained by adding a single additional curve, coming from a new vertex of a Farey triangle of distance one from that with vertices $1/0$, $1/1$, and $0/1$, and then taking the image of this curve under the degree three rotation. We see from Figure~\ref{Fig:Farey} that the only possibility is to next include $2/1$, $-2/1$, and $1/2$, which are all identified in $\SigmaMod$. 

In particular, the curve $1/2$ has $AB$-cutting sequence $BAA$, and $LR$-cutting sequence obtained by concatenating copies of $LRRL$, which is equivalent to $L^2 R^2$. Thus in the trefoil complement, it runs twice over the $L$ lobe of the branched surface, then twice over the $R$ lobe, before closing up.

The link given by the union of $LR$ and $L^2 R^2$ is also arithmetic, by Theorem~\ref{Thm:ModularArithmetic1}. It is shown on the left of Figure~\ref{Fig:ModularArithmeticExamples}.  This is a three component link in $S^3$. 
{As mentioned, any  finite union of modular geodesics has an embedding as orbits on the template. We remark this embedding is unique, and can be found in general using an algorithm, for example as in Birman and Williams}~\cite[Algorithm~2.4.3]{BirmanWilliams1983}; {see also Hui and Rodr\'iguez-Migueles}~\cite{HuiRodriguezMigueles}.

\begin{figure}
\centering
\includegraphics[width=3.5cm]{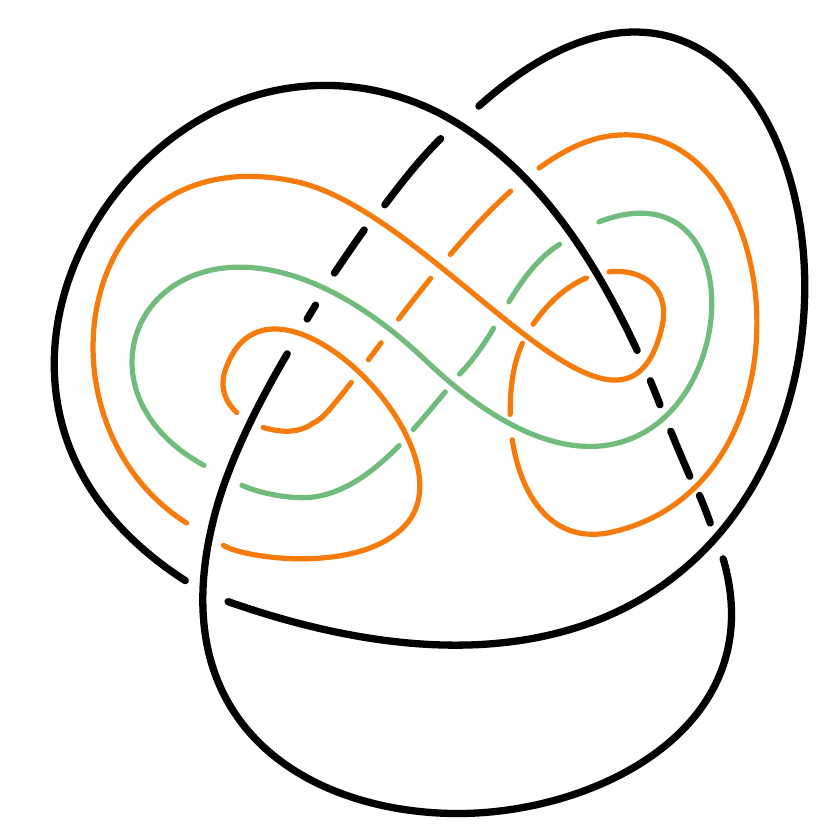}
\includegraphics[width=3.5cm]{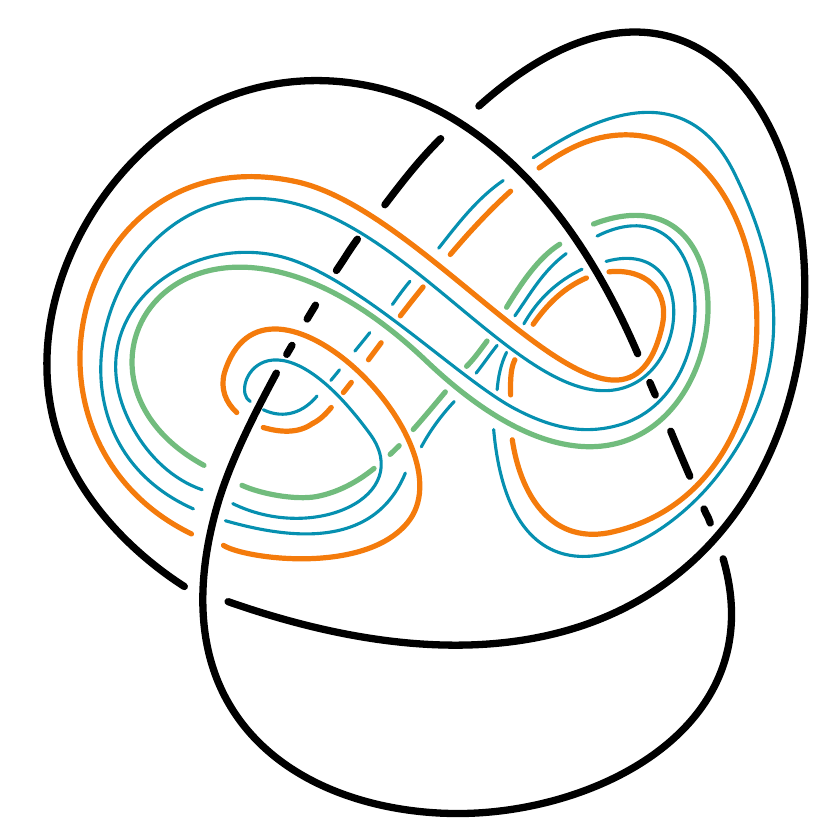}
\includegraphics[width=3.5cm]{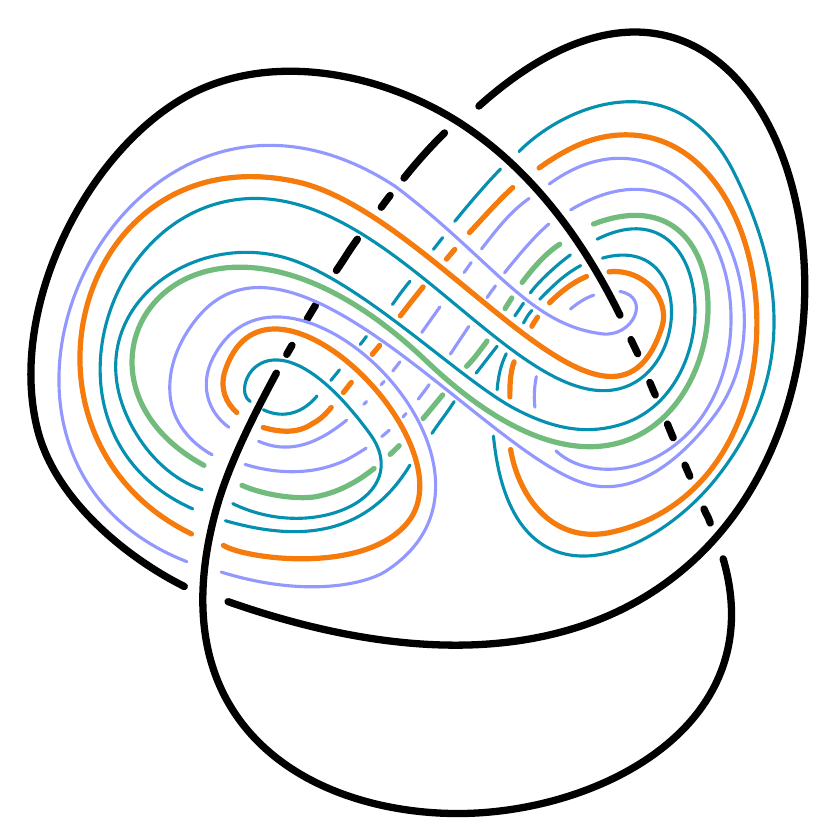}
\caption{After the Whitehead link, the next three simplest arithmetic links from Theorem~\ref{Thm:ModularArithmetic1} are shown {(note that in our conventions the symbol $R$ corresponds to the left side of the figure)}.}
\label{Fig:ModularArithmeticExamples}
\end{figure}

There are two choices for a four component link in $S^3$ that arises from Theorem~\ref{Thm:ModularArithmetic1}. 
One choice is to add slopes $3/2$, $-2/1$, and $1/3$, which are identified to a modular curve with $LR$-cutting sequence with repeating portion {$LRL^2R^2$}. Thus the four component arithmetic link in $S^3$ consists of the trefoil and the geodesics $LR$, $L^2R^2$, and {$LRL^2R^2$.} This link is shown in the middle of Figure~\ref{Fig:ModularArithmeticExamples}.

The other option is to add slopes $3/1$, $-1/2$, and $2/3$, which are identified to a modular curve with $LR$-cutting sequence with repeating portion {$LR^2L^2R$.} Thus another four component arithmetic link in $S^3$ consists of the trefoil, the link $LR$, $L^2R^2$, and {$LR^2L^2R$.}  

Note that the five component link consisting of the trefoil and the geodesics $LR$, $L^2R^2$, $LR^2L^2R$ and $LRL^2R^2$ is also arithmetic by Theorem~\ref{Thm:ModularArithmetic1}. This link is shown on the right of Figure~\ref{Fig:ModularArithmeticExamples}.

\section{Volume versus hyperbolic length}

In this section, our goal is to make explicit the relationship between volume of the canonical lift complement and geometric length of the original geodesic, for some sequence of geodesics in some surfaces.

\begin{remark}\label{Rmk:TraceLength}
Recall that for $A\in \PSL(2,\mathbb{R})$ a hyperbolic element of trace $t$, the eigenvalues of $A$ are $\frac{-t\pm\sqrt{t^2-4}}{2}$. Let $\lambda_A$ be the eigenvalue satisfying $|\lambda_A|>1$. Then the length of the closed geodesic determined by $A$ is $2\ln|\lambda_A|$.
\end{remark}

\begin{lemma}\label{Lem:length}
Let $\gamma_n$ be the unique closed geodesic on the modular surface lifting to the geodesic $1/n$ on $\Sigma_{1,1}$.
For $\Gamma_n:=\{{\gamma_i}\}_{i=1}^n$, the length $\ell(\Gamma_n)$ satisfies
\[
\ell(\Gamma_n)\asymp n^2.
\]
\end{lemma}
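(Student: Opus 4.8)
The plan is to compute the length $\ell(\gamma_j)$ of each geodesic $\gamma_j$ individually and then sum over $j=1,\dots,n$, using Remark~\ref{Rmk:TraceLength} to convert trace data into length. First I would identify the conjugacy class in $\PSL(2,\ZZ)$ corresponding to $\gamma_j$: by Example~\ref{Example:cuts}, the geodesic on $\Sigma_{1,1}$ of slope $1/j$ has $LR$-cutting sequence with repeating block $LR(RL)^{j-1}$, so the associated modular element is (up to conjugacy and cyclic permutation) the matrix product $L\,R\,(R\,L)^{j-1} = LR^2(LR^2)^{j-2}\cdots$ — more cleanly, the product of $L=\begin{pmatrix}1&1\\0&1\end{pmatrix}$ and $R=\begin{pmatrix}1&0\\1&1\end{pmatrix}$ in the pattern dictated by the cutting sequence. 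The key point is to get the \emph{trace} $t_j$ of this product as a function of $j$.

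The main computation is therefore a trace estimate: I expect $t_j$ to grow like a polynomial of degree two in $j$, i.e.\ $t_j \asymp j^2$. One clean way to see this: the word $LR(RL)^{j-1}$ has total length $2j$ in the generators $L,R$, and since $L,R$ are parabolics generating a free semigroup with nonnegative entries, any such positive word $W$ of length $m$ has trace bounded between (roughly) linear and exponential in $m$ in general — but here the specific alternating-ish pattern $R L R L \cdots$ with the extra $L$ at the front keeps the growth polynomial. Concretely, $RL = \begin{pmatrix}2&1\\1&1\end{pmatrix}$ has eigenvalues $\frac{3\pm\sqrt5}{2}$, but $(RL)^{j-1}$ alone would give exponential growth; the correct reading is that slope $1/j$ corresponds instead to the word whose cutting sequence is $L R^j$ (equivalently $R^j L$, reading the continued fraction $1/j = [j]$ via Algorithm~\ref{Alg:CuttingSequence}), giving the matrix
\[
M_j = L R^j = \begin{pmatrix}1&1\\0&1\end{pmatrix}\begin{pmatrix}1&0\\j&1\end{pmatrix} = \begin{pmatrix}1+j & 1\\ j & 1\end{pmatrix},
\]
so $t_j = \Trace(M_j) = j+2$. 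Then the eigenvalue $\lambda_j$ with $|\lambda_j|>1$ satisfies $\lambda_j = \frac{(j+2)+\sqrt{(j+2)^2-4}}{2} \asymp j+2$, and by Remark~\ref{Rmk:TraceLength}, $\ell(\gamma_j) = 2\ln|\lambda_j| \asymp \ln(j+2)$. Hence
\[
\ell(\Gamma_n) = \sum_{j=1}^n \ell(\gamma_j) \asymp \sum_{j=1}^n \ln(j+2) \asymp n\ln n,
\]
which is \emph{not} $n^2$ — so I must be misidentifying the geodesic. The resolution is that the relevant cutting sequence for slope $1/n$ is the one in Example~\ref{Example:cuts}, namely the repeating block $LR(RL)^{n-1}$ of length $2n$, whose matrix is $LR(RL)^{n-1}$; since $(RL)^{n-1}$ grows exponentially, $\ell(\gamma_n) \asymp n$, and then $\ell(\Gamma_n) = \sum_{j=1}^n \ell(\gamma_j) \asymp \sum_{j=1}^n j \asymp n^2$, as claimed.

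So the steps, in order, are: (1) use Example~\ref{Example:cuts} to write down the modular element $M_j$ for $\gamma_j$ as the matrix product corresponding to the repeating block $LR(RL)^{j-1}$; (2) estimate $\Trace(M_j)$ — show it grows exponentially in $j$, with $\ln|\lambda_{M_j}| \asymp j$, by observing the block contains $(RL)^{j-1}$ as a subword and $RL$ is hyperbolic with $|\lambda_{RL}|>1$, together with a matching upper bound from submultiplicativity of the operator norm over a word of length $2j$; (3) apply Remark~\ref{Rmk:TraceLength} to get $\ell(\gamma_j) = 2\ln|\lambda_{M_j}| \asymp j$; (4) sum: $\ell(\Gamma_n) = \sum_{j=1}^n \ell(\gamma_j) \asymp \sum_{j=1}^n j = \frac{n(n+1)}{2} \asymp n^2$. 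The main obstacle is step~(2): one must pin down the two-sided bound $\ell(\gamma_j)\asymp j$ with honest constants, which requires controlling the trace of a positive word in $L,R$ of length $2j$ from both sides — the lower bound via the embedded power $(RL)^{j-1}$ and positivity of matrix entries (so no cancellation), the upper bound via $\|M_j\| \le \|L\|\,\|R\|\,\|RL\|^{j-1} \le C^j$. The rest is bookkeeping.
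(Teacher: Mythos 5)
Your final plan is correct and essentially the same as the paper's: identify $\gamma_j$ with the word $LR(RL)^{j-1}$ from Example~\ref{Example:cuts}, bound its trace between two exponentials in $j$ so that $\ell(\gamma_j)\asymp j$ via Remark~\ref{Rmk:TraceLength}, and sum to get $\ell(\Gamma_n)\asymp n^2$. The only (immaterial) difference is that the paper obtains the two-sided trace bound $(3/2)^n\le\Trace A_n\le 4^n$ by an explicit entrywise recursion rather than your positivity-plus-norm-submultiplicativity argument, and you should simply delete the $LR^j$ detour, which you correctly identify as a misreading of the cutting sequence.
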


\begin{proof}
The matrix representative corresponding to $1/n$ is $A_n:=LR(RL)^{n-1}$; see Example~\ref{Example:cuts}. 
Let  
\[
\left(\begin{array}{cc}
	a_n&b_n\\
	c_n&d_n
	\end{array}\right):=(RL)^{n-1}, 
 \quad \mbox{so} \quad
\left(\begin{array}{cc}
	a_{n+1}&b_{n+1}\\
	c_{n+1}&d_{n+1}
	\end{array}\right)= \left(\begin{array}{cc}
	a_n+c_n & b_n+d_n\\
	a_n+2c_n & b_n+2d_n
\end{array}\right), 
\]
and
\[ 
A_{n+1}= \left(\begin{array}{cc}
	3a_n+4c_n & 3b_n+4d_n\\
	2a_n+3c_n & 2b_n+3d_n
	\end{array}\right).
\]
Then
$(3/2)\Trace{A_{n-1}}\leq \Trace{A_n}\leq 4\Trace{A_{n-1}}$. As $\Trace{A_{1}}=3$, by induction
\[\left(\frac{3}{2}\right)^n\leq \Trace{A_n}\leq 4^n.\]
The eigenvalue $\lambda_{n}$ of $A_n$ with $|\lambda_{n}|>1$ is bounded by
\[ \frac{|\lambda_{n}|}{2}\leq\frac{\Trace{A_n}}{2}\leq |\lambda_{n}|. \]
Thus the length of $\gamma_n$ satisfies
\[ n\ln\left(\frac{3}{2}\right) \leq \ell(\gamma_n)\leq 2n\ln(4), \]
and thus
\[n^2\ln\left(\frac{3}{2}\right)\leq  \ell(\Gamma_n)\leq  2n^2\ln(4).\qedhere \]
\end{proof}

\begin{corollary}\label{Cor:VolumeLengthTorus}
Let $\Gamma_k:=\left\{{\gamma_{1,n} = 1/n}, {\gamma_{2,n} = n/(n-1)}, {\gamma_{3,n} = (1-n)/1}\right\}_{n=1}^k$ be a collection of oriented simple closed geodesics on the once-punctured torus with a hyperbolic metric $\rho.$ Then for $\widehat{\Gamma_k}$ the canonical lifts of $\Gamma_k$,
\begin{enumerate}
\item $\UT(\Sigma_{1,1})\setminus{\widehat{{\Gamma}_k}}$ is arithmetic,
\item $\Vol(\UT(\Sigma_{1,1})\setminus{\widehat{{\Gamma}_k}})= 3k\,v_{oct}$, and
\item $\Vol(\UT(\Sigma_{1,1})\setminus{\widehat{{\Gamma}_k}})\asymp \sqrt{\ell_{\rho}(\Gamma_k)}$.
\end{enumerate}

\end{corollary}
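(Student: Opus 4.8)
\emph{Overall plan.} The idea is to read off parts (1) and (2) directly from the octahedral description already established, and to obtain (3) by combining the resulting octahedron count with Lemma~\ref{Lem:length} after changing to a convenient hyperbolic metric.

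\emph{Parts (1) and (2).} I would begin by observing that $\Gamma_k$ is exactly the collection $\Gamma$ produced in the proof of Theorem~\ref{Thm:ModularArithmetic1} from the slope $\alpha=1/k$: the shortest path in the Farey triangulation from the triangle $(0/1,1/1,1/0)$ to a triangle with vertex $1/k$ runs through the $k$ triangles $(0,1,\infty),(0,1,1/2),(0,1/2,1/3),\dots,(0,1/(k-1),1/k)$, so $x=k$, and closing the resulting slopes under $V$ and $V^2$ (with $V(1/n)=n/(n-1)$ and $V^2(1/n)=(1-n)/1$ by Lemma~\ref{Lem:RotationV}) produces precisely the $3k$ distinct slopes of $\Gamma_k$. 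That proof already records that these $3k$ slopes can be cyclically ordered so that consecutive curves are Farey neighbours; alternatively one checks directly that in the order
\[
-(k-1)<\dots<-1<0<1/k<\dots<1/2<1<k/(k-1)<\dots<3/2<2<\infty
\]
every pair of consecutive slopes has geometric intersection number one, as does the pair $\infty,\,-(k-1)$. With the orientations of Theorem~\ref{Thm:CanonicalLiftFareyNeighbours} (each $\gamma_{j,n}$ pointing toward $\exp(i\arctan(\text{slope}))$, which are the orientations in the corollary), part (1) of that theorem shows that $\UT(\Sigma_{1,1})\setminus\widehat{\Gamma_k}$ is arithmetic and is obtained by gluing $|\Gamma_k|=3k$ regular ideal octahedra; its homeomorphism type does not depend on $\rho$ (it agrees with $\PT(\Sigma_{1,1})\setminus\widetilde{\Gamma_k}$, which is defined without reference to orientations), and its volume is the sum of the volumes of these octahedra, namely $3k\,v_{oct}$. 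This proves (1) and (2).

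\emph{Part (3).} Since $\Vol(\UT(\Sigma_{1,1})\setminus\widehat{\Gamma_k})=3k\,v_{oct}$, it suffices to prove $\ell_\rho(\Gamma_k)\asymp k^2$ with constants independent of $k$; then $\sqrt{\ell_\rho(\Gamma_k)}\asymp k\asymp\Vol(\UT(\Sigma_{1,1})\setminus\widehat{\Gamma_k})$. The first move is to reduce to a single convenient metric: for any two complete finite-area hyperbolic structures on $\Sigma_{1,1}$ the length functions on closed geodesics are bi-Lipschitz equivalent, with constant depending only on the two structures (a standard fact -- the identity map is homotopic to a homeomorphism that is bi-Lipschitz on a compact core and, up to bounded distortion, isometric on the finitely many standard cusp neighbourhoods; equivalently, Thurston's Lipschitz metric on Teichm\"uller space takes finite values). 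Hence it is enough to estimate $\ell_{\rho_0}(\Gamma_k)$, where $\rho_0$ is the hyperbolic metric on $\Sigma_{1,1}$ pulled back from $\SigmaMod$ via the $6$-fold cover $p$ of Lemma~\ref{Lem:6FoldCover}. For this metric, $\gamma_{1,n}=1/n$, $\gamma_{2,n}=n/(n-1)=V(1/n)$ and $\gamma_{3,n}=(1-n)/1=V^2(1/n)$ all project under $p$ to the single modular geodesic $\gamma_n$ of Lemma~\ref{Lem:length}, because $V$ and $V^2$ are deck transformations of $p$. As deck transformations are $\rho_0$-isometries and a closed geodesic covers its $p$-image with some degree in $\{1,\dots,6\}$, this gives
\[
\ell_{\rho_0}(\gamma_{1,n})=\ell_{\rho_0}(\gamma_{2,n})=\ell_{\rho_0}(\gamma_{3,n})\in[\,\ell(\gamma_n),\,6\,\ell(\gamma_n)\,],
\]
where $\ell(\gamma_n)$ is the length of $\gamma_n$ on $\SigmaMod$. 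By the estimate in the proof of Lemma~\ref{Lem:length} (namely $n\ln(3/2)\le\ell(\gamma_n)\le 2n\ln 4$) one has $\ell(\gamma_n)\asymp n$, so that
\[
\ell_{\rho_0}(\Gamma_k)=\sum_{n=1}^k\big(\ell_{\rho_0}(\gamma_{1,n})+\ell_{\rho_0}(\gamma_{2,n})+\ell_{\rho_0}(\gamma_{3,n})\big)\asymp\sum_{n=1}^k n\asymp k^2 .
\]
By the bi-Lipschitz reduction $\ell_\rho(\Gamma_k)\asymp k^2$ as well, and combining with $\Vol=3k\,v_{oct}$ yields (3).

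\emph{Expected obstacle.} Every step except the bi-Lipschitz comparison of length functions across distinct hyperbolic metrics on the once-punctured torus is a direct assembly of results already in hand; that comparison -- and in particular controlling the behaviour near the cusp, which is where the finite-area hypothesis is used -- is the one point that needs care, although it is standard. If one wished to avoid it, one could instead estimate $\ell_\rho(1/n)$ directly via hyperbolic trigonometry applied to the element of $\pi_1(\Sigma_{1,1})$ carried by the slope $1/n$, but routing through the modular cover and Lemma~\ref{Lem:length} is the shorter path.
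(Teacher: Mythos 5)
Your proposal is correct and follows essentially the same route as the paper: parts (1) and (2) come from observing that the $3k$ slopes form a cyclically ordered family of Farey neighbours and invoking Theorem~\ref{Thm:CanonicalLiftFareyNeighbours}, and part (3) comes from projecting to the modular surface, applying Lemma~\ref{Lem:length} to get $\ell(\Gamma_k)\asymp k^2$ in the pullback metric, and then using the bi-Lipschitz equivalence of hyperbolic metrics (the paper cites \cite[Lemma~4.1]{BergeronPinskySilberman} for this last step). Your explicit verification of the Farey ordering and of the bounded covering degree of each lifted geodesic only makes explicit what the paper leaves implicit.
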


\begin{proof}
Notice that $\Gamma_k$ are Farey neighbours, so by Theorem~\ref{Thm:CanonicalLiftFareyNeighbours}, $\UT(\Sigma_{1,1})\setminus{\widehat{{\Gamma}_k}}$ is arithmetic and
\[\Vol(\UT(\Sigma_{1,1})\setminus{\widehat{{\Gamma}_k}})=3k\,v_{oct}.\]
Observe that the geodesics $1/n$, $n/(n-1)$, $(1-n)/1$ project under the $6$-fold cover of the modular surface to $LR(RL)^{n-1}$; see Example~\ref{Example:cuts}. Then by Lemma~\ref{Lem:length}, the length of the projection of $\Gamma_k$ to $\SigmaMod$ is coarsely equivalent to $k^2$. Thus in the 6-fold cover $\Sigma_{1,1}$, the lengths satisfy
\[\ell_{\rho_{1,1}}({\Gamma_k})\asymp 6k^2,\]
where $\rho_{1,1}$ is the pullback metric induced on $\Sigma_{1,1}$ by the metric on the modular surface $\SigmaMod$. Then
\[ 
\Vol(\UT(\Sigma_{1,1})\setminus{\widehat{{\Gamma}_k}})\asymp v_{oct}\sqrt{\frac{3}{2}}\sqrt{\ell_{\rho_{1,1}}({\Gamma_k})}.
\]

The proof of this result for any hyperbolic metric on the once-punctured torus follows from the fact that any pair of hyperbolic metrics on a hyperbolic surface are bilipschitz; see for example \cite[Lemma~4.1]{BergeronPinskySilberman}. 
\end{proof}

By projecting the geodesics in Corollary~\ref{Cor:VolumeLengthTorus} under the $6$-fold cover to the modular surface we obtain the following result from the introduction. 

\begin{named}{Corollary~\ref{Cor:ModularVolumes}}
There exists a sequence $\{\gamma_k\}_{k\in\mathbb{N}}$ of closed geodesics on the modular surface with length $\ell({\gamma_k})\nearrow \infty$ such that for $\Gamma_n:=\bigcup_{k=1}^n \gamma_k$
\begin{enumerate}
  \item $\UT(\SigmaMod)\setminus{\widehat{\Gamma}_n}$ is arithmetic,
  \item $\Vol(\UT(\SigmaMod)\setminus{\widehat{{\Gamma}_n}})= n\,v_{oct}/2$, and
  \item $\Vol(\UT(\SigmaMod)\setminus{\widehat{{\Gamma}_n}})\asymp \sqrt{\ell({\Gamma_n})}$.
\end{enumerate}
Here $v_{oct}$ is the volume of a regular ideal octahedron. 
\end{named}

\begin{named}{Corollary~\ref{Cor:Cover}}
Let $\Sigma_{g,r}$ be an orientable punctured surface with any hyperbolic metric. Then there exists a sequence $\{{\Gamma_k}\}_{k\in\mathbb{N}}$ of filling finite sets of closed geodesics on $\Sigma_{g,r}$ with lengths $\ell({\Gamma_k})\nearrow \infty$, such that  $\UT(\Sigma_{g,r})\setminus{\widehat{{\Gamma_k}}}$ is arithmetic for each $k\in\mathbb{N}$ and
\[ 
Vol(\UT(\Sigma_{g,n})\setminus{\widehat{{\Gamma_k}}})\asymp \sqrt{\ell({\Gamma_k})}.
\]
\end{named}

\begin{proof}
By Remark~\ref{rem: punctured surfaces} we can construct a finite (branched) covering map $p$ from any orientable punctured hyperbolic surface $\Sigma_{g,r}$ of genus $g$ with $r$ punctures to the modular surface $\SigmaMod.$

Let $\widetilde \Gamma_k$ be the finite set of closed geodesics on $\Sigma$ obtained as the preimage under $p$ of the closed geodesics $\{\gamma_n\}_{n=1}^k$ of Lemma~\ref{Lem:length}). By Lemma~\ref{Lem:OctahedraArithmetic}, $\UT(\Sigma_{g,r})\setminus{\widehat{{\Gamma_k}}}$ is arithmetic. A similar estimation of the volume and lengths as in Corollary~\ref{Cor:VolumeLengthTorus} gives
\[
\Vol(\UT(\Sigma_{g,r})\setminus{\widehat{{\Gamma_k}}})\asymp \sqrt{\ell_{\rho}({\Gamma_k})},
\]
with the length $\ell_{\rho}(\Gamma_k)$ is measured in the pullback metric $\Sigma_{g,r}$ induced by the metric on $\SigmaMod$.
Again the proof of this result for any hyperbolic metric on $\Sigma_{g,r}$ follows from the fact that any pair of hyperbolic metrics on a hyperbolic surface are bilipschitz; for example \cite[Lemma~4.1]{BergeronPinskySilberman}.
\end{proof}

\section{Further questions}

There is only one arithmetic knot complement in the 3-sphere, namely the figure-8 knot, due to Reid~\cite{Reid:ArithmeticKnot}. 
{Is the modular geodesic $LR$ the only modular geodesic with arithmetic complement of its canonical lift? Notice that the question has a negative answer in the general context of any knot in the complement of the trefoil. Hatcher found an example of an arithmetic two-component link, which one component is the trefoil knot, and the trace field is $\QQ(\sqrt{-2})$}; see Figure~17 in~\cite{Hatcher:trefoilarithmeticlink}. However, the unknotted component in Hatcher's example is not a canonical lift of a closed geodesic in the modular surface.

All arithmetic modular links produced in this paper are conjugate in $\PSL(2,\CC)$ to a group commensurable with $\PSL(2,\ZZ(\sqrt{-1}))$. Are there examples of arithmetic modular links conjugate to groups commensurable with $\PSL(2, O_d)$ for $O_d$ a ring of integers in a different quadratic number field $\QQ(\sqrt{-d})$? More generally, is some classification possible? For example, in the 3-sphere, there are infinitely many arithmetic links. However, Baker and Reid showed that there are only finitely many principal congruence link complements in the 3-sphere \cite{BakerReid2014}, where a non-compact finite volume hyperbolic 3-manifold is principal congruence if it is isometric to $\HH^3/\Gamma(I)$ where
$\Gamma(I)=\mathrm{ker}\{\PSL(2,O_d) \to \PSL(2,O_d/I)\}$ for some ideal $I$ in $O_d$. Baker, Goerner, and Reid have now enumerated all principal congruence link complements in the 3-sphere~\cite{BakerGoernerReid}. Is a similar classification possible for modular links?

Any closed geodesic on the modular surface naturally corresponds to a real quadratic extension of $\mathbb{Q}$~\cite{SARNAK}. Does the arithmeticity of the complement of the corresponding canonical lift relate to this? For the examples in this paper, the quadratic field corresponding to the $LR$ geodesic is $\QQ(\sqrt{5})$. The geodesic $L^2R^2$ has quadratic field $\QQ(\sqrt{2})$. The geodesics $LR^2L^2L$ and $LRL^2R^2$ have the same length, and both correspond to the same quadratic field $\QQ(\sqrt{221})$. In general, geodesics corresponding to different maximal ideals in the same quadratic field will have the same length.

\bibliography{references}
\bibliographystyle{amsplain}

\end{document}